\newcommand{\sE}{{\mathcal E}}
\newcommand{\sF}{{\mathcal F}}
\newcommand{\sH}{{\mathcal H}}
\newcommand{\sO}{{\mathcal O}}
\newcommand{\sX}{{\mathcal X}}
\newcommand{\gothD}{{\mathfrak D}}
\newcommand{\gothF}{{\mathfrak F}}
\newcommand{\gothM}{{\mathfrak M}}
\newcommand{\gothX}{{\mathfrak X}}
\newcommand{\gothm}{{\mathfrak m}}
\newcommand{\scrH}{{\mathscr H}}
\newcommand{\scrX}{{\mathscr X}}
\newcommand{\C}{{\mathbb C}}
\newcommand{\N}{{\mathbb N}}
\renewcommand{\P}{{\mathbb P}}
\newcommand{\Q}{{\mathbb Q}}
\newcommand{\Z}{{\mathbb Z}}
\newcommand{\codim}{\operatorname{codim}}
\newcommand{\Hilb}{{\rm Hilb}}
\newcommand{\Hdg}{\operatorname{Hdg}}
\newcommand{\img}{\operatorname{im}}
\newcommand{\into}{{\, \hookrightarrow\,}}
\newcommand{\isom}{{\ \cong\ }}
\newcommand{\NS}{\operatorname{NS}}
\newcommand{\rank}{{\rm rank}}
\newcommand{\ratl}{\dashrightarrow}
\newcommand{\reg}{{\operatorname{reg}}}
\newcommand{\rk}{{\rm rk}}
\newcommand{\Spec}{\operatorname{Spec}}
\newcommand{\Supp}{\operatorname{Supp}}
\renewcommand{\to}[1][]{\xrightarrow{\ #1\ }}
\newcommand{\tensor}{\otimes}
\newcommand{\vphi}{\varphi}
\newcommand{\vrho}{{\varrho}}
\newtheoremstyle{citing}
  {}
  {}
  {\itshape}
  {}
  {\bfseries}
  {\textbf{.}}
  {.5em}
  {\thmnote{#3}}
\theoremstyle{plain}
\newtheorem{thm}[subsection]{Theorem}
\theoremstyle{definition}
\newtheorem{corollary}[subsection]{Corollary}
\newtheorem{definition}[subsection]{Definition}
\newtheorem{Ex}[subsection]{Example}
\newtheorem{lemma}[subsection]{Lemma}
\newtheorem{proposition}[subsection]{Proposition}
\newtheorem{prop}[subsection]{Proposition}
\numberwithin{equation}{section}
\theoremstyle{remark}
\newtheorem{remark}[subsection]{Remark}
\theoremstyle{citing}
\newcommand{\CH}{\operatorname{CH}}
\newcommand{\Def}{\operatorname{Def}}
\newcommand{\txf}{{T_X\langle F\rangle }}
\newcommand{\gothtxf}{{T_{\gothX/S}\langle \gothF\rangle }}
\newcommand{\deff}{{\Def(F,X)}}
\newcommand{\defx}{{\Def(X)}}
\newcommand{\dR}{{\operatorname{dR}}}
\newcommand{\Gr}{{\operatorname{Gr}}}
\title[Coisotropic fibrations on  holomorphic symplectic manifolds]{Stability  of  coisotropic fibrations on  holomorphic symplectic manifolds}
\author{Christian Lehn}
\address{Christian Lehn\\Institut f\"ur Algebraische Geometrie\\Gottfried Wilhelm Leibniz Universit\"at Hannover\\
Welfengarten 1\\30167 Hannover\\Germany}
\email{lehn@math.uni-hannover.de}
\author{Gianluca Pacienza}
\address{Gianluca Pacienza\\Institut de Recherche Math\'ematique
Avanc\'ee\\ Universit\'e de Strasbourg et CNRS\\
7 rue Ren\'e Descartes\\67084 Strasbourg Cedex\\France}
\email{pacienza@math.unistra.fr}
\let\origmaketitle\maketitle
\def\maketitle{
  \begingroup
  \def\uppercasenonmath##1{} 
  \let\MakeUppercase\relax 
  \origmaketitle
  \endgroup
}
\begin{document}
\thispagestyle{empty}

\maketitle

\begin{abstract}
We investigate the stability of  fibers of  coisotropic fibrations on  holomorphic symplectic manifolds and generalize Voisin's result on Lagrangian subvarieties to this framework. We present applications to holomorphic symplectic manifolds which are deformation equivalent  to Hilbert schemes of points on a $K3$ surface or to generalized Kummer manifolds.
\end{abstract}

\setlength{\parindent}{0em}
\setcounter{tocdepth}{1}

\section{Introduction}\label{sec intro}
\thispagestyle{empty}
Let $X$ be a holomorphic symplectic manifold, by which we mean a compact, simply connected K\"ahler manifold  
 such  that $H^0(X,\Omega^2_X)=\langle \sigma_X\rangle$ where $\sigma_X$ is a non-degenerate 2-form. Voisin showed in \cite{Voi89} that if $F\subset X$ is a Lagrangian submanifold, that is, an analytic connected isotropic submanifold (i.e. $(\sigma_X)_{|F}=0$) of maximal dimension $\dim(X)/2$, then the only obstruction to deforming $F$ along with the ambient holomorphic symplectic manifold $X$ is Hodge theoretic.  More precisely, 
 she proved that 
$$
\Hdg_F = \Def(F,X)
$$ 
where $\Hdg_F$ is the closed analytic subset of the Kuranishi space $\Def(X)$ of $X$ where the cohomology class $[F]$ remains of type $(n,n)$, $2n=\dim (X)$
and $\Def(F,X)\subset \Def(X)$ parametrizes deformations of $X$ containing a deformation of $F$. Following Voisin, we call such a property \emph{stability of the submanifold $F$}. 

Recently, Voisin \cite{Voi15} brought to light the importance of a natural generalization of Lagrangian subvarieties in the study of projective holomorphic symplectic manifolds $X$ and their Chow groups of 0-cycles $\CH_0(X)$. To be more precise, we need to recall some definitions. For any point $x\in X$ she considers its {\it rational orbit} $O_x$, i.e.,  the set of points in $X$ which are rationally equivalent to $x$, and for any $k=1,\ldots, n$ she defines $S_k (X)$ (respectively $S_{k} \CH_0(X)$)
to be the subset of points $x\in X$ such that 
$\dim O_x\geq k$ (respectively the subgroup of $\CH_0(X)$ generated by classes of points $x\in S_k(X)$).
She conjectures that this new decreasing filtration $S_{\bullet} \CH_0(X)$ on  $\CH_0(X)$ is opposite to the conjectural Bloch-Beilinson filtration $F_{BB}^\bullet \CH_0(X)$ (in the sense that 
the natural map 
\begin{equation}\label{eq:BB}
S_{k} \CH_0(X)\to \CH_0(X)/F_{BB}^{2(n-k+1)}\CH_0(X)
\end{equation} is an isomorphism) and that it provides the splitting of the Bloch-Beilinson filtration conjectured by Beauville in \cite{B07}, see \cite{Voi15} for the details. 
 
 A pivotal r\^ole in her approach is played by the existence of algebraically coisotropic subvarieties with constant cycle isotropic fibers. Indeed, on the one hand she proves in \cite[Theorem 0.7]{Voi15} that if $P$ is a  subvariety of $S_k(X)$ of maximal dimension $2n-k$, then 
 $P$ is endowed with a dominant rational map $P\dashrightarrow B$ whose fibers $F$ are $k$-dimensional and isotropic (which is the definition of algebraically coisotropic subvarieties) and such that any two points of $F$ are rationally equivalent in $X$. Moreover, she observes that if such subvarieties exist 
 (which she conjectures to be true cf. \cite[Conjecture 0.4]{Voi15}), the axioms of the Bloch-Beilinson filtration  would already imply the surjectivity of the map (\ref{eq:BB}), see \cite[Lemma 3.9]{Voi15}.
 
 The study of the stability of algebraically coisotropic subvarieties seems therefore to be a relevant task in this new and promising research direction. 

Notice that by work of Amerik and Campana \cite[Theorem 1.2]{AC} we know that  a codimension one submanifold of a  projective holomorphic symplectic manifold (of dimension $\geq 4$) is coisotropic if and only if it is uniruled. 
As it became clear in \cite{CP}, where the stability of rational curves covering a divisor has been verified, instead of working with the coisotropic subvariety itself it seems more efficient to deal with the isotropic fibers $F$ and their stability (see \cite[Remark 3.5]{CP} and Remark \ref{rmk:defP} below). This apparently weaker property allows nevertheless to recover the stability of a union of uniruled codimension one subvarieties containing the initial uniruled divisor, see \cite[Corollary 3.4]{CP}. This is the viewpoint that we adopt here.

Our main result confirms the stability 
of fibers of smoothly algebraically coisotropic subvarieties (cf. Definition \ref{def:main}) of any possible dimension and is summarized in the following.

\begin{thm}\label{thm:stability-intro}
Let $X$ be a  compact holomorphic symplectic manifold of dimension $2n$ and let $F\subset X$ be an isotropic submanifold of dimension $k=1,\ldots,n$. 
Suppose that $F$ is the general fiber of a smoothly 
algebraically coisotropic subvariety and that 
\begin{equation}\label{eq:hypo}
(n-k) h^1(\mathcal O_F)=0.
\end{equation}
Then
$$
\Def (F,X)=\Hdg_F
$$
and for any $t\in \Def (F,X)$ the deformation $X_t$ of $X$ contains a codimension $k$ algebraically coisotropic subvariety $P_t$ covered by the deformations $F_t$ of $F$. 
Moreover,
$$
\codim_{\Def(X)} \Def (F,X) = \rk\left(H^2(X,\C) \to H^2(F,\C)\right).
$$
In particular, if $\NS(F)$ has rank $1$, the subvariety $F$ deforms in a hyperplane in the Kuranishi space of $X$.
\end{thm}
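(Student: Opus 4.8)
The plan is to reduce the codimension computation to a comparison of two linear maps out of $H^1(X,T_X)$, both controlled by the restriction of $(1,1)$-classes to $F$, and then to control the gap between them by means of the symplectic reduction along the fibers together with hypothesis (\ref{eq:hypo}). Building on the identification $\Def(F,X)=\Hdg_F$ and the description of its tangent space obtained above, the Zariski tangent space $T_0\Def(F,X)$ is the kernel of the obstruction map
$$
\phi\colon H^1(X,T_X)\longrightarrow H^1(F,N_{F/X}),\qquad \kappa\mapsto\bar\kappa,
$$
sending a Kodaira--Spencer class to the obstruction to extending $F$ sideways, and $\Def(F,X)$ is smooth of codimension $\rk\phi$ inside the smooth space $\Def(X)$. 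Thus it suffices to show $\rk\phi=\rk\big(H^2(X,\C)\rightarrow H^2(F,\C)\big)$. I would use $\sigma_X$ throughout: it gives $\sigma_X\colon T_X|_F\xrightarrow{\ \sim\ }\Omega^1_X|_F$, carrying $T_F$ into the conormal bundle $N_{F/X}^*$ by isotropy, so that $N_{F/X}\cong\Omega^1_X|_F/\sigma_X(T_F)$ fits into a short exact sequence
$$
0\longrightarrow\sG\longrightarrow N_{F/X}\longrightarrow\Omega^1_F\longrightarrow 0,
$$
where $\sG:=N_{F/X}^*/\sigma_X(T_F)\cong T_F^{\perp}/T_F$ is the symplectic reduction of $T_F$, a bundle of rank $2(n-k)$.

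Next I would identify the right-hand side. Since $F$ is isotropic we have $\sigma_X|_F=0$ and $\bar\sigma_X|_F=0$, so the restriction map annihilates $H^{2,0}(X)=\langle\sigma_X\rangle$ and $H^{0,2}(X)$; hence
$$
\rk\big(H^2(X,\C)\rightarrow H^2(F,\C)\big)=\rk\big(H^{1,1}(X)\rightarrow H^{1,1}(F)\big)=\rk r,
$$
where $r\colon H^1(X,\Omega^1_X)\rightarrow H^1(F,\Omega^1_F)$ is the restriction of $(1,1)$-classes. Transporting $\phi$ through $\sigma_X$ identifies its source with $H^1(X,\Omega^1_X)$, and composing $\phi$ with the map $q_*\colon H^1(F,N_{F/X})\rightarrow H^1(F,\Omega^1_F)$ induced by the quotient $q\colon N_{F/X}\twoheadrightarrow\Omega^1_F$ recovers, via $\Omega^1_X\to\Omega^1_X|_F\to\Omega^1_F$, exactly the restriction map $r$; that is, $r=q_*\circ\phi$.

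The heart of the argument, and the step I expect to be the main obstacle, is to prove $\ker\phi=\ker r$, equivalently $\rk\phi=\rk r$. The inclusion $\ker\phi\subseteq\ker r$ is immediate from $r=q_*\circ\phi$. For the reverse, the long exact sequence attached to $0\rightarrow\sG\rightarrow N_{F/X}\rightarrow\Omega^1_F\rightarrow 0$ gives $\ker q_*=\img\big(H^1(F,\sG)\rightarrow H^1(F,N_{F/X})\big)$, so it is enough to establish $H^1(F,\sG)=0$. This is exactly where the hypothesis that $F$ is a general fiber of a smoothly algebraically coisotropic subvariety $P\subset X$ enters: writing $\pi\colon P\dashrightarrow B$ for the fibration onto its $2(n-k)$-dimensional base, coisotropy of $P$ yields $T_P|_F=T_F^{\perp}$ and $T_F=T_P^{\perp}|_F$, so that $\sG\cong T_F^{\perp}/T_F\cong T_P|_F/T_P^{\perp}|_F$ is the pullback from $B$ of the reduced tangent bundle, hence the trivial bundle $\mathcal{O}_F^{\oplus 2(n-k)}$ because $F$ is a fiber. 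Consequently $H^1(F,\sG)\cong H^1(F,\mathcal{O}_F)^{\oplus 2(n-k)}$ has dimension $2(n-k)\,h^1(\mathcal{O}_F)$, which vanishes precisely by hypothesis (\ref{eq:hypo}). Therefore $q_*$ is injective, $\ker r=\ker\phi$, and $\rk\phi=\rk r=\rk\big(H^2(X,\C)\rightarrow H^2(F,\C)\big)$, which is the asserted formula.

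Finally I would deduce the rank-one consequence. The restriction map $H^2(X,\Q)\rightarrow H^2(F,\Q)$ is defined over $\Q$, and by the computation above its complexification has image in $H^{1,1}(F)$; hence its rational image lies in $H^{1,1}(F)\cap H^2(F,\Q)=\NS(F)\otimes\Q$. If $\NS(F)$ has rank $1$, this image has dimension at most $1$, so $\codim_{\Def(X)}\Def(F,X)\le 1$. It equals $1$ because the restriction of a K\"ahler class $[\omega]\in H^{1,1}(X)$ to the positive-dimensional subvariety $F$ (recall $k\ge 1$) is nonzero, as $\int_F\omega^k>0$. Thus $\Def(F,X)$ is a smooth hypersurface through the origin, i.e.\ $F$ deforms in a hyperplane of the Kuranishi space of $X$.
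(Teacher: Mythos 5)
Your tangent-space computation agrees with ours: the exact sequence $0\to \sO_F^{\oplus 2(n-k)}\to N_{F/X}\to \Omega^1_F\to 0$ coming from the coisotropic fibration (Lemma \ref{lemma isotropic}), the vanishing $h^1\bigl(F,\sO_F^{\oplus 2(n-k)}\bigr)=0$ supplied by hypothesis \eqref{eq:hypo}, and the resulting equality $\ker\bigl(H^1(T_X)\to H^1(N_{F/X})\bigr)=\ker\bigl(H^1(\Omega_X)\to H^1(\Omega_F)\bigr)$ are exactly the ingredients of Theorem \ref{thm:stability}; your direct treatment of the rank-one case (the image of $H^2(X,\Q)$ lies in $\NS(F)_\Q$, and after extending scalars it contains the nonzero restriction of a K\"ahler class) is a clean substitute for the appeal to Verbitsky in Remark \ref{rmk:rho=1}. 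The genuine gap is at the very start: you ``build on the identification $\Def(F,X)=\Hdg_F$'', which is the first assertion to be proved, and you assert without justification that $\Def(F,X)$ is \emph{smooth} with Zariski tangent space $\ker\phi$. Neither is automatic. A priori $\Def(F,X)$ is only a closed analytic subgerm of $\Hdg_F$; its germ at the origin could be singular or of dimension strictly smaller than $\dim\ker\phi$, and in that case equality of first-order data proves nothing about equality of germs. Establishing unobstructedness of $\sH$ and of $\Def(F,X)$ is the main technical content of the proof (Theorem \ref{thm defo fiber}): one runs the $T^1$-lifting principle, which requires showing that $H^1(T_{\gothX/S}\langle\gothF\rangle)$ and its image in $H^1(T_{\gothX/S})$ are free modules over every local Artinian base $S$. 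That step needs the \emph{relative} versions of your exact sequences (Lemmas \ref{lemma remains isotropic} and \ref{lemma isotropic}), Deligne's degeneration theorem for the freeness of $H^1(\Omega_{\gothX/S})$ and $H^1(\Omega_{\gothF/S})$, the freeness of the cokernel of the relative restriction map from \cite[Theorem 4.17]{L2}, and an induction over infinitesimal neighbourhoods showing that the relative kernel bundle in \eqref{eq normal2} stays trivial --- and hypothesis \eqref{eq:hypo} is used there as well, not only in your cohomology count on the central fiber.

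You also leave the second assertion of the theorem unaddressed: that for every $t\in\Def(F,X)$ the deformations $F_t$ sweep out a codimension $k$ algebraically coisotropic subvariety $P_t\subset X_t$. This is not formal. In the paper it is Corollary \ref{cor:covered}: smoothness of the relative Hilbert scheme over $\Def(F,X)$ gives constant fiber dimension $2n-2k=h^0(N_{F_t/X_t})$, whence the evaluation map is generically finite onto a $(2n-k)$-dimensional subvariety $P_t$, and one then verifies $\sigma_t^{\,n-k+1}\vert_{P_t^{\reg}}=0$ and applies Lemma \ref{lemma coisotropic} to conclude that $P_t$ is algebraically coisotropic. Supplying these two missing pieces is essential; the remainder of your argument then goes through.
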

As in \cite{Voi89} we restrict ourselves to deforming smooth submanifolds $F$, although it seems plausible that some extensions to the singular case are possible in the spirit of \cite{L1}. Notice however, that the coisotropic subvariety $P\subset X$ swept out by deformations of $F$ inside $X$ is \emph{not} supposed to be smooth.
We do not know whether the theorem holds without the  vanishing hypothesis  (\ref{eq:hypo}), which is automatically satisfied in the Lagrangian case. 
Nevertheless, notice  that the hypothesis (\ref{eq:hypo})
is satisfied 
if $F$ has trivial Chow group $\CH_0(F)_\Q=\Q$ (e.g. rationally connected), a case which is of particular interest for the cycle-theoretic applications envisioned in \cite{Voi15}.

On the other hand, exactly as in \cite{Voi89} one can show (cf. Propositions \ref{proposition voisin1} and \ref{proposition voisin2}) without any cohomological hypothesis  that the Hodge locus of an isotropic submanifold $F\subset X$ coincides with the locus where  $F$ or its cohomology class remain isotropic (i.e. $(\sigma_t)_{|F}=0$ resp. $\sigma_t\cup [F]=0$). As a consequence, we have the following two other characterizations of $\Def (F,X)$.
\begin{corollary}\label{cor:char}
Let $X$ and $F$ be as in Theorem \ref{thm:stability-intro}. 
Then the following are equivalent:
\begin{enumerate}
\item[(i)] $t\in \Def (F,X)$;
\item[(ii)] $\sigma_t\cup[F]=0 \in H^{2k+2}(X_t,\C)\cong H^{2k+2}(X,\C)$, for $0\not =\sigma_t\in H^0(X_t, \Omega^2_{X_t})$;
\item[(iii)] $(j_t)^*(\sigma_t)=0$ where $(j_t)^*$ denotes the composition of the inclusion $j:F\hookrightarrow X$
with the isomorphism $H^2(X_t,\C)\cong H^2(X,\C)$.
\end{enumerate}
\end{corollary}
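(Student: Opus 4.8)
The three conditions are naturally compared through the Hodge locus $\Hdg_F\subseteq\Def(X)$, the locus on which the class $[F]$ stays of type $(2n-k,2n-k)$, so the plan is to prove each of (i), (ii), (iii) equivalent to the single statement $t\in\Hdg_F$. The equivalence of (i) with $t\in\Hdg_F$ is exactly the identity $\Def(F,X)=\Hdg_F$ furnished by Theorem \ref{thm:stability-intro}, and this is the only point where the hypothesis (\ref{eq:hypo}) intervenes; it may simply be quoted. What remains is to recognise the two isotropy loci of (ii) and (iii) as two incarnations of $\Hdg_F$. Following \cite{Voi89}, I expect both identifications to hold with no cohomological assumption on $F$, so that they form the genuinely self-contained part of the argument, recorded in Propositions \ref{proposition voisin1} and \ref{proposition voisin2}.

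For the equivalence of (iii) with $t\in\Hdg_F$ I would argue infinitesimally and then globalise. After trivialising the local system of $H^2$, the assignment $t\mapsto(j_t)^*\sigma_t\in H^2(F,\C)$ is holomorphic and vanishes at the base point, and (iii) says that $t$ lies in its zero set. Its differential at $0$ sends a first-order deformation $u\in H^1(X,T_X)=T_0\Def(X)$ to $j^*(u\,\lrcorner\,\sigma)$: by Griffiths transversality the Gauss--Manin derivative of the period is $\nabla_u\sigma=\lambda\sigma+u\,\lrcorner\,\sigma$ with $u\,\lrcorner\,\sigma\in H^{1,1}(X)$, and the scaling term $\lambda\sigma$ restricts to zero on $F$ precisely because $\sigma|_F=0$, so the restriction of $\sigma_t$ varies with differential $u\mapsto j^*(u\,\lrcorner\,\sigma)$ independently of the chosen scaling. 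On the other hand the first-order condition cutting out $\Hdg_F$ is the vanishing of the Kodaira--Spencer action $u\,\lrcorner\,[F]\in H^{2n-k-1,\,2n-k+1}(X)$. Matching these two a priori different linear conditions is Voisin's computation: non-degeneracy of $\sigma$ makes $u\mapsto u\,\lrcorner\,\sigma$ the tautological identification $H^1(X,T_X)\cong H^{1,1}(X)$, while isotropy of $F$ produces a surjection $N_{F/X}\twoheadrightarrow\Omega^1_F$ induced by $\sigma$, through which the vanishing of $u\,\lrcorner\,[F]$ becomes equivalent to that of $j^*(u\,\lrcorner\,\sigma)$. Once the first-order conditions agree one upgrades this to equality of the two analytic germs, following \cite{Voi89}, via the local period map, which is an immersion by local Torelli.

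The comparison of (ii) and (iii) is then transparent in one direction. Writing $[F]=j_*1$ and applying the projection formula gives
\[
\sigma_t\cup[F]=(j_t)_*\big((j_t)^*\sigma_t\big),
\]
so (iii) implies (ii) at once. For the reverse I would not argue through injectivity of the Gysin map, which fails in general, but instead establish (ii)$\Leftrightarrow(t\in\Hdg_F)$ by a period computation parallel to the one above, whose first-order form is the vanishing of $(u\,\lrcorner\,\sigma)\cup[F]=j_*\big(j^*(u\,\lrcorner\,\sigma)\big)$; this is the content of Proposition \ref{proposition voisin2}. Combined with (iii)$\Leftrightarrow(t\in\Hdg_F)$ it yields (ii)$\Leftrightarrow$(iii) by transitivity, completing (i)$\Leftrightarrow$(ii)$\Leftrightarrow$(iii). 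The only real obstacle is thus the self-contained identification of $\Hdg_F$ with the isotropy loci: concretely, matching two first-order conditions that live in different cohomology groups through the $\sigma$-induced normal-bundle identification, and upgrading this equality of Zariski tangent spaces to equality of the full analytic germs. All of the deformation-theoretic weight, and the sole use of (\ref{eq:hypo}), has already been discharged in Theorem \ref{thm:stability-intro}, so the corollary itself amounts to organising these Hodge-theoretic descriptions.
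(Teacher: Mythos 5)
Your proposal is correct and follows exactly the paper's route: the paper derives the corollary in one line by combining Theorem \ref{thm:stability-intro} (giving $\Def(F,X)=\Hdg_F$, hence (i)) with Propositions \ref{proposition voisin1} and \ref{proposition voisin2} (identifying $\Hdg_F$ with the loci in (ii) and (iii), respectively). Your additional sketch of how those two propositions are established is simply an unpacking of the cited arguments from \cite{Voi89} and does not change the structure of the proof.
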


Note that we always work with germs of spaces, see the introduction to section \ref{sec defo}.
As in \cite[Corollaire 1.4]{Voi89} we also deduce the following. 
\begin{corollary}\label{cor:voisin}
With the notation above we have that $\deff$ coincides with the locus of 
deformations of $X$ preserving the subspace $L^\Q_F$ of $NS(X)_\Q$ defined as the intersection with $H^2(X,\Q)$ of the orthogonal, with respect to the Beauville-Bogomolov quadratic form, to $\ker(\mu_F),$ where $\mu_F$
is the cup-product map
$$
 \mu_F:H^2(X,\C)\to H^{2k+2}(X,\C),\ \eta\mapsto \eta\cup[F].
$$
 \end{corollary}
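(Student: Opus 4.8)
The plan is to reduce the statement, by linear algebra over $\Q$, to the cup-product description of $\Def(F,X)$ furnished by Corollary \ref{cor:char}(ii). Throughout I use the identifications $H^2(X_t,\C)\cong H^2(X,\C)$ coming from the local system, under which $[F]$ is a fixed class and $\sigma_t\in H^{2,0}(X_t)$ varies, while the Beauville--Bogomolov form $q$ is parallel and hence constant under these identifications.

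First I would record that $[F]\in H^{2k}(X,\Q)$ is rational, so $\mu_F$ is defined over $\Q$ and $\ker(\mu_F)=K_\Q\otimes_\Q\C$ with $K_\Q:=\ker(\mu_F)\cap H^2(X,\Q)$. Since $q$ is rational and non-degenerate, taking orthogonals commutes with complexification, so by the very definition in the statement $L^\Q_F=K_\Q^{\perp_q}$ (orthogonal inside $H^2(X,\Q)$), and the double-orthogonal identity gives
$$
(L^\Q_F\otimes_\Q\C)^{\perp_q}=K_\Q\otimes_\Q\C=\ker(\mu_F).
$$
Next I would invoke the orthogonality of the Hodge decomposition of $H^2(X_t)$ for $q$: one has $q(H^{2,0},H^{2,0})=q(H^{2,0},H^{1,1})=0$ while $q$ pairs $H^{2,0}(X_t)=\C\,\sigma_t$ non-trivially with $H^{0,2}(X_t)$. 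Consequently a rational class $\alpha$ is of type $(1,1)$ on $X_t$ if and only if $q(\alpha,\sigma_t)=0$. Therefore the deformation $X_t$ preserves $L^\Q_F$, i.e.\ $L^\Q_F\subset\NS(X_t)_\Q$, if and only if $\sigma_t\in(L^\Q_F\otimes_\Q\C)^{\perp_q}=\ker(\mu_F)$, that is, $\sigma_t\cup[F]=0$.

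Combining this with the equivalence $t\in\Def(F,X)\iff\sigma_t\cup[F]=0$ of Corollary \ref{cor:char}(ii) would finish the proof; evaluating at $t=0$, where $F$ isotropic gives $\sigma_X\cup[F]=0$, shows in addition that $L^\Q_F\subset\NS(X)_\Q$, as asserted. The one point I expect to require care is precisely the bridge between the topological condition $\sigma_t\cup[F]=0$ and the Hodge-theoretic condition $q(\sigma_t,L^\Q_F)=0$: it hinges on the rationality of $\ker(\mu_F)$ together with the non-degeneracy of $q$, and is where the cup-product and Beauville--Bogomolov descriptions of $\Def(F,X)$ are matched.
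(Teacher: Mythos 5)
Your argument is correct and follows essentially the same route as the paper, which simply invokes the linear-algebra argument of Voisin's Corollaire 1.4: identify the condition ``$L^\Q_F$ stays of type $(1,1)$'' with $q(\sigma_t,L^\Q_F)=0$, use rationality of $\ker(\mu_F)$ and non-degeneracy of $q$ to rewrite this as $\sigma_t\cup[F]=0$, and then apply the characterization of $\deff$ from Corollary \ref{cor:char}(ii) (equivalently, Theorem \ref{thm:stability-intro} with Proposition \ref{proposition voisin1}). You have merely written out the details that the paper leaves to the reference.
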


Turning to holomorphic symplectic manifolds of given deformation type and to the Chow groups of such manifolds,
as an application of Theorem \ref{thm:stability-intro} we obtain the following. 
\begin{corollary}\label{cor:moduli}
Let $\mathfrak M$ (respectively $\mathfrak M^{pol}$) be the moduli space of marked (respectively marked and polarized) holomorphic symplectic manifolds of dimension $2n$ which are deformation equivalent  to Hilbert schemes of points on a $K3$ surface or to generalized Kummer manifolds. Then for any  $k=1,\ldots,n$  there exists a divisor $\mathfrak D_k\subset \mathfrak M$  (respectively $\mathfrak D_k^{pol}\subset \mathfrak M^{pol}$) such that for any $t\in \mathfrak D_k$ (respectively $t\in\mathfrak D_k^{pol}$) the corresponding manifold $X_t$ contains an algebraically coisotropic subvariety of codimension $k$ which is covered by $\mathbb P^k$'s. Moreover, if $t\in \mathfrak D_k$ (respectively $t\in\mathfrak D_k^{pol}$) is a general point, $X_t$ is not isomorphic   
to a Hilbert scheme of $n$ points on a $K3$ surface or to a generalized Kummer manifold. 
\end{corollary}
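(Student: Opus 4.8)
The plan is to reduce the statement to Theorem \ref{thm:stability-intro} by exhibiting, for each $k$ and each of the two deformation types, one explicit holomorphic symplectic manifold $X_0$ together with an isotropic submanifold $F\cong\P^k$ that is the general fibre of a smoothly algebraically coisotropic subvariety; the theorem then spreads $F$ out over a divisor in moduli, and a lattice computation rules out the Hilbert scheme (respectively generalized Kummer) structure on the generic member of that divisor.

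For the $K3^{[n]}$-type I would take a projective $K3$ surface $S$ carrying a smooth rational curve $C\cong\P^1$ (of self-intersection $-2$), and in the polarized case a compatible polarization $h$. Fixing $n-k$ general points $p_1,\dots,p_{n-k}\in S\setminus C$, set $X_0:=S^{[n]}$ and
\[
F:=\{\,\xi'\cup\{p_1,\dots,p_{n-k}\}\ :\ \xi'\in C^{[k]}\,\}\subset X_0 .
\]
As $C\cong\P^1$ one has $C^{[k]}\cong\P^k$, so $F\cong\P^k$, and $F$ is isotropic because the symplectic form vanishes on any subscheme supported on a curve (the same computation that exhibits $C^{[n]}\subset S^{[n]}$ as a Lagrangian $\P^n$). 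Letting the points $p_i$ vary, $F$ becomes the general fibre of the dominant rational map from the closure $P$ of the image of $C^{[k]}\times S^{[n-k]}\dashrightarrow S^{[n]}$; here $\dim P=2n-k$, so $P$ has codimension $k$, and since $F\cong\P^k$ is rationally connected its points are mutually rationally equivalent, so $P$ is smoothly algebraically coisotropic in the sense of Definition \ref{def:main}. Because $F\cong\P^k$ has $\CH_0(F)_\Q=\Q$ the vanishing \eqref{eq:hypo} holds, and $\NS(\P^k)$ has rank $1$, so Theorem \ref{thm:stability-intro} gives $\Def(F,X_0)=\Hdg_F$, a hyperplane in the Kuranishi space; every nearby $X_t$ contains a codimension $k$ algebraically coisotropic $P_t$ covered by the deformations $F_t$ of $F$, which remain isomorphic to $\P^k$ since $\P^k$ is infinitesimally rigid. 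Via the local Torelli theorem this hyperplane globalises to the required divisor $\mathfrak D_k\subset\mathfrak M$ (respectively $\mathfrak D_k^{pol}\subset\mathfrak M^{pol}$).

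For the generalized Kummer type the same idea does not apply verbatim, since an abelian surface $A$ contains no rational curves. Here I would instead produce the isotropic $\P^k$'s from rational families inside $A^{[n+1]}$ supported on the exceptional loci of the Hilbert--Chow morphism and lying in the fibre of the summation map $A^{[n+1]}\to A$, and check that they sweep out a smoothly algebraically coisotropic subvariety of the generalized Kummer $K_n(A)$ whose general fibre is an isotropic $\P^k$ to which Theorem \ref{thm:stability-intro} applies. Establishing this substitute construction is where I expect the main difficulty to lie.

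Finally, to see that the general $X_t\in\mathfrak D_k$ is neither a Hilbert scheme nor a generalized Kummer, first note that the two deformation types are topologically distinct ($b_2=23$ versus $b_2=7$), so on the $K3^{[n]}$-component only the Hilbert scheme has to be excluded, and on the Kummer component only the generalized Kummer. On the $K3^{[n]}$-component, outside a countable union of proper subvarieties of $\mathfrak D_k$ the Néron--Severi group is exactly $\Z\lambda$ (respectively $\langle h,\lambda\rangle$ when polarized), where $\lambda$ spans the subspace $L_F^\Q$ of Corollary \ref{cor:voisin}; a direct computation identifies $\lambda$ with a multiple of $2(n-1)[C]-(k-1)\delta$, where $\delta$ is half the Hilbert--Chow exceptional class, and determines its Beauville--Bogomolov square and divisibility. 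An isomorphism $X_t\cong S'^{[n]}$ would carry the exceptional class of $S'^{[n]}$ to a primitive class of $\NS(X_t)$ of square $-2(n-1)$ and divisibility $2(n-1)$; comparing invariants — by divisibility if $k=1$, by the strictly larger absolute value of the square if $k\ge 2$ — shows that $\Z\lambda$ contains no such class, so $X_t$ is not a Hilbert scheme, the polarized case requiring the analogous check on the rank-two lattice $\langle h,\lambda\rangle$ that we are free to arrange through the choice of $(S,h,C)$. The Kummer component is handled symmetrically, its exceptional class having square $-2(n+1)$ and divisibility $2(n+1)$. The two genuinely delicate points are thus the coisotropic construction in the generalized Kummer case and this comparison of Néron--Severi lattices.
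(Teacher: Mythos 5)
Your treatment of the $K3^{[n]}$ deformation class is essentially the paper's own (Example \ref{ex:K3}): the locus $P_k=\{\xi\in S^{[n]} : \length(\Supp(\xi)\cap R)\geq k\}$ fibered over $S^{[n-k]}$ with fibers $R^{[k]}\cong\P^k$ is exactly the closure of the image of $C^{[k]}\times S^{[n-k]}\ratl S^{[n]}$ that you describe, and the application of Theorem \ref{thm:stability-intro} (rigidity of $\P^k$, Picard rank one, local Torelli to globalize) is the same. The genuine gap is the generalized Kummer case, which you explicitly leave open. Your suggested substitute --- rational families supported on the exceptional locus of the Hilbert--Chow morphism --- is not the right idea and would run into trouble: the fibers of Hilbert--Chow over non-reduced cycles (the punctual Hilbert schemes) are not isomorphic to $\P^k$ for $k\geq 2$ (they are singular from length $4$ on), and the exceptional locus is a divisor, not naturally a codimension-$k$ subvariety fibered in $\P^k$'s. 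The paper's construction (Example \ref{ex:Kum}) is entirely different and does not use the exceptional locus at all: one takes a torus $T$ containing a smooth elliptic curve $E$, so that $T$ is fibered over $E'=T/E$ with fibers the translates $E_t$; a degree-$(k+1)$ divisor on $E_t$ has $h^0=k+1$, so the fibers of the sum map $E_t^{(k+1)}\to E_{(k+1)t}$ are $\P^k$'s; letting $t$ vary one forms the relative symmetric product $\sE^{(k+1)}$, takes $\tilde P\subset T^{[n-k]}\times\sE^{(k+1)}$ to be the fiber of the total sum map over $0\in T$, and defines $P_k\subset K_n(T)$ as the closure of the image of $\tilde P\ratl K_n(T)$. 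Without this (or some equally concrete) construction the Kummer half of the corollary is unproved. A minor further point: rational connectedness of $\P^k$ is irrelevant to checking that $P$ is smoothly algebraically coisotropic in the sense of Definition \ref{def:main}; what one actually uses is that the fibers are isotropic of dimension equal to $\codim P$, so that $T_{P/B}=T_P^\perp$ generically.

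For the last assertion (the general $X_t\in\gothD_k$ is not a Hilbert scheme or generalized Kummer), your route differs from the paper's main argument. The paper argues softly: a line $\ell_t$ in a fiber $\P^k$ deforms with $F$, so $\gothD_k$ lies in the Hodge locus of the class dual to $[\ell_t]$; if the general member of $\gothD_k$ were a Hilbert scheme, $\gothD_k$ would also lie in the Hodge locus of the exceptional class, and since these two Hodge loci are shown to be distinct divisors (the dual of the exceptional class is contracted to the symmetric product while $\ell_t$ is not), $\gothD_k$ would have codimension $\geq 2$, a contradiction. Your lattice-theoretic comparison (identifying the generator of $\NS(X_t)$ for very general $t$ with a multiple of $2(n-1)[C]-(k-1)\delta$ and comparing square and divisibility with those of the exceptional class) is essentially the ``alternative'' argument the paper sketches in Remark \ref{remark k3n} via the degrees $\deg D_\Sigma\vert_{F_k}=(\Sigma.R)_S$ and $\deg E\vert_{F_k}=2(k-1)$; it is workable but requires the numerical details you only assert, whereas the paper's primary argument avoids them.
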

In the polarized case Corollary \ref{cor:moduli}  proves therefore 
Voisin's conjecture \cite[Conjecture 0.4]{Voi15} for any fixed $k$ along a divisorial locus in the above moduli spaces. In particular, the map (\ref{eq:BB}) is surjective 
for points in $\mathfrak D_k^{pol}$. Moreover, such coisotropic subvarieties are of the most special form, as they are covered by projective spaces. Few other  results in this direction are available for varieties not isomorphic to a Hilbert scheme of points on a $K3$ surface or to a generalized Kummer manifold: Voisin showed existence for  the LLSV 8-folds \cite[Corollary 4.9]{Voi15}, Lin showed in \cite{Lin} existence for $k=n$ for projective holomorphic symplectic manifold having a Lagrangian fibration, and, as recalled above, in \cite{CP} the existence of uniruled divisors on deformations of $K3^{[n]}$ is showed. 

Notice that by \cite{V95} a generic holomorphic symplectic manifold (so in particular non-projective) can only contain  holomorphic symplectic submanifolds. 
Corollary \ref{cor:moduli}  provides in this case the
existence   Voisin's coisotropic subvarieties of fixed codimension on a locus of the largest possible dimension in the moduli space of holomorphic symplectic manifolds deformation equivalent to one of the two infinite series of examples.

We conclude the introduction with some words on the proof of our main result. 
Following \cite{Voi89} one checks that the Hodge locus of the class of an isotropic submanifold is always smooth (see Section 3). 
On the other hand, using the $T^1$-lifting principle we show that, under the hypotheses of Theorem \ref{thm:stability-intro},  $\Def (F,X)$ is also smooth (cf. Theorem \ref{thm defo fiber}). 
The equality between these two loci can then be checked at the level of tangent spaces, which we do in Theorem \ref{thm:stability}.

\subsection*{Acknowledgments.} 
C.L. was supported by the DFG through the research grant Le 3093/2-1.
G.P. was partially supported by the University of Strasbourg Institute for Advanced Study (USIAS), as part of a USIAS Fellowship.


\section{(Co)isotropic subvarieties}\label{section coiso}
For the basic properties of holomorphic symplectic manifolds we refer the reader to \cite{Huy99} and \cite[Part III]{GHJ}.
We recall some definitions and results, the presentation follows \cite{Voi15}. Let $X$ be a smooth symplectic manifold of dimension $2n$ and denote by $\sigma \in \Gamma(X,\Omega_X^2)$ its symplectic form.

\begin{definition}
A subvariety $F \subset X$ is called \emph{isotropic} if $\sigma\vert_{F^\reg}=0$ where $F^\reg$ denotes the regular part of $F$.
\end{definition}

\begin{definition}\label{def:main}
A subvariety $P \subset X$ is called \emph{coisotropic} if $T_{P^\reg}^\perp \subset T_{P^\reg}$ where $P^\reg$ denotes the regular part of $P$ and $T_{P^\reg}^\perp = \{v\in T_X\vert_{P^\reg} \mid \sigma(v,\cdot)=0\}$ is the radical of the symplectic form. The variety $P$ is called \emph{algebraically coisotropic} if the foliation $T_{P^\reg}^\perp$ is algebraically integrable. This means that there is a rational map $\phi: P \ratl B$ and a holomorphic $2$-form $\sigma_B$ such that over an open dense subset where $\pi$ is defined we have $T_{P/B}=T_P^\perp$. 
We say that $P$ is \emph{smoothly algebraically coisotropic} if it is algebraically coisotropic, the map $\phi$ is almost holomorphic (recall that a rational map is almost holomorphic if it is defined and proper on a dense open subset) and the general fiber of $\phi$ is smooth. The map  $\phi: P \ratl B$ is called in this case a \emph{coisotropic fibration}. 
We say that an isotropic submanifold $F\subset X$ is {\it general fiber of a 
coisotropic fibration} if there exists a smoothly algebraically coisotropic subvariety $P\subset X$ such that $F$ is isomorphic to the general fiber of the associated coisotropic fibration. 
\end{definition}

\begin{remark} ---
\begin{enumerate} 
\item Note that $P$ is automatically smooth in a neighborhood of $F$ by generic flatness and \cite[Exc. III.10.2]{Hart} or its analogue in the analytic category.

\item Let $\tilde \pi :\tilde P \to \tilde B$ be a birational model of $P\ratl B$ with smooth varieties $\tilde P$ and $\tilde B$. One easily shows that there exists a holomorphic $2$-form $\sigma_B$ on $\tilde B$ which is generically non-degenerate and satisfies $\sigma\vert_{\tilde P}=\tilde\pi^*\sigma_B$.
	
\item If an algebraically coisotropic subvariety $P$ has codimension $k$, then from the previous item and the non-degeneracy of the symplectic form one deduces that $B$ has dimension $2n-2k$. 	
	\item Note that an algebraically coisotropic subvariety $P$ which is smooth and such that $\phi:P \ratl B$ coincides with the mrc-fibration of $P$ is smoothly algebraically coisotropic. This follows from \cite{Ca92}.
\end{enumerate}
\end{remark}

If $P \subset X$ is algebraically coisotropic, then the general fiber $F$ of the corresponding map $\phi:P \ratl B$ is isotropic. 
The following lemma is well-known and easily proven by symplectic linear algebra.

\begin{lemma}\label{lemma coisotropic}
 A subvariety $P\subset X$ of codimension $k$ in a symplectic manifold of dimension $2n$ is coisotropic if and only if the symplectic form $\sigma$ on $X$ satisfies $\sigma^{n-k+1}\vert_{P^\reg}=0$.\qed
\end{lemma}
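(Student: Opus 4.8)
The plan is to reduce the statement to a pointwise assertion of symplectic linear algebra and to track the rank of the restricted form. Since both conditions---coisotropy and the vanishing $\sigma^{n-k+1}\vert_{P^\reg}=0$---can be tested fibrewise in $T_X$ over the points of $P^\reg$, I would fix a point $p\in P^\reg$, set $V=T_{X,p}$ with its non-degenerate form $\sigma$, and let $W=T_{P^\reg,p}\subset V$ be the resulting codimension-$k$ subspace. The restriction $\sigma\vert_W$ is then a skew-symmetric bilinear form on $W$, and the coisotropy condition at $p$ is exactly $W^\perp\subset W$, where $W^\perp$ denotes the $\sigma$-orthogonal.

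First I would identify the radical of $\sigma\vert_W$. By definition its radical is $\{w\in W:\sigma(w,w')=0\text{ for all }w'\in W\}=W\cap W^\perp$; write $r=\dim(W\cap W^\perp)$ for its dimension. Since $\sigma$ is non-degenerate we have $\dim W^\perp=k$, whence $r\le k$, with equality if and only if $W^\perp\subset W$, i.e. if and only if $W$ is coisotropic. The rank of the skew form $\sigma\vert_W$ is therefore $\dim W-r=2n-k-r$, which is even; write it as $2m$.

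Next I would invoke the normal form for skew-symmetric forms: choosing a basis in which $\sigma\vert_W=\sum_{i=1}^{m}e_{2i-1}^*\wedge e_{2i}^*$, one computes $(\sigma\vert_W)^m=m!\,e_1^*\wedge\cdots\wedge e_{2m}^*\ne 0$ while $(\sigma\vert_W)^{m+1}=0$. Consequently $\sigma^{n-k+1}\vert_W=0$ holds precisely when $n-k+1>m$, i.e. when $m\le n-k$. Substituting $2m=2n-k-r$ converts this inequality into $r\ge k$. Combining with the previous paragraph, $\sigma^{n-k+1}\vert_W=0$ if and only if $r=k$ if and only if $W$ is coisotropic, which is the asserted equivalence at the point $p$; letting $p$ vary over $P^\reg$ yields the lemma.

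There is no serious obstacle here, as the argument is entirely formal. The only point requiring care is the bookkeeping between the codimension $k$, the dimension $r$ of the radical, and the rank $2m$ of $\sigma\vert_W$, together with the observation that the a priori inequality $r\le k$ forces the two conditions $r\ge k$ and $W^\perp\subset W$ to coincide.
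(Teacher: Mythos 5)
Your argument is correct: the pointwise reduction, the identification of the radical of $\sigma\vert_W$ with $W\cap W^\perp$, and the rank bookkeeping $2m=2n-k-r$ together with $(\sigma\vert_W)^{m+1}=0$, $(\sigma\vert_W)^m\neq 0$ all check out. The paper omits the proof entirely, remarking only that the lemma is ``easily proven by symplectic linear algebra,'' and your write-up supplies exactly that standard argument.
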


\section{Description of the Hodge locus}\label{section hodge locus}

Let $X$ be a holomorphic symplectic manifold and let $\defx$ be its Kuranishi space. We know by the Bogomolov-Tian-Todorov theorem that $\defx$ is smooth. It is a space germ, but we will always have chosen a representative, which by smoothness  we may assume to be biholomorphic to a small complex ball of dimension $h^ {1,1}(X)$. It is well-known that the universal family $\gothX\to\defx$ is a family of holomorphic symplectic manifolds in a neighborhood of $[X]\in\defx$. For $t\in \defx$ we will denote by $X_t$ the holomorphic symplectic manifold corresponding to $t$.

For an isotropic subvariety $F \subset X$ (not necessarily smooth) we will recall Voisin's description of the Hodge locus $\Hdg_F$ in \cite{Voi89}. It was originally formulated for Lagrangian subvarieties but it carries over literally to the case of isotropic subvarieties. We will denote by $\scrH^k$ the holomorphic vector bundles on $\defx$ whose fiber at $t\in \defx$ is just $H^k(X_t,\C)$. The class $[F] \in H^{2k}(X,\C)$ has a unique flat lifting to $\scrH^{2k}$ which we will also denote by $[F]$.

We denote by $s \in \scrH^2$ a section which fiberwise is the class $[\sigma_t]$ of a symplectic form on $X_t$. Let  $S_{\cup[F]}\subset\defx$ be the subspace of $\defx$ defined by the vanishing of the section $s\cup [F] \in \scrH^{2k+2}$.
Set-theoretically, $S_{\cup[F]}$ can thus be described as $\{t\in S: \sigma_t\cup[F]=0 \in H^{2k+2}(X_t,\C)\}$. As in \cite[Proposition 1.2]{Voi89} one shows

\begin{proposition}\label{proposition voisin1}
Let $F \subset X$ be an isotropic subvariety in a holomorphic symplectic manifold. Then 
$\Hdg_F = S_{\cup[F]}$ and it is a smooth subvariety of $\defx$ 
of codimension 
equal to the rank of cup-product map
$$
 \mu_F:H^2(X,\C)\to H^{2k+2}(X,\C),\ \eta\mapsto \eta\cup[F].
$$
\end{proposition}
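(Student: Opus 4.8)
The plan is to follow the proof of \cite[Proposition 1.2]{Voi89} and to adapt it from the Lagrangian to the isotropic setting. Throughout I keep the flat lift $[F]\in\scrH^{2k}$ and the Hodge section $s\in\scrH^2$ with $s(t)=[\sigma_t]$, so that by definition $S_{\cup[F]}$ is the zero locus of the holomorphic section $s\cup[F]$ of $\scrH^{2k+2}$.

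First I would record the two consequences of isotropy that drive everything. Denoting by $j\colon F\rightarrow X$ the inclusion (of codimension $k$), the projection formula gives $\sigma\cup[F]=j_*(j^*\sigma)=0$ in $H^{2k+2}(X,\C)$, since $j^*\sigma=\sigma|_{F^\reg}=0$; hence $[X]\in S_{\cup[F]}$. As $[F]$ is a real class, conjugation yields $\bar\sigma\cup[F]=0$ as well, so the fixed linear map $\mu_F$ annihilates $\langle\sigma,\bar\sigma\rangle=H^{2,0}\oplus H^{0,2}$. In particular $\rk\mu_F=\rk\bigl(\mu_F|_{H^{1,1}}\bigr)$, the integer that must turn out to be the codimension.

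The second, and cleanest, step is to describe $S_{\cup[F]}$ through the period map and thereby obtain smoothness and codimension. Since cup product is topological, $s\cup[F]=\mu_F(s)$ for the fixed linear map $\mu_F\colon H^2\rightarrow H^{2k+2}$, so
\[
S_{\cup[F]}=\mathcal{P}^{-1}\bigl(Q\cap\P(\ker\mu_F)\bigr),
\]
where $\mathcal{P}\colon\defx\rightarrow Q$ is the period map and $Q=\{q=0\}\subset\P(H^2(X,\C))$ is the Beauville--Bogomolov quadric. By local Torelli $\mathcal{P}$ is a local isomorphism, so it suffices to study the linear section $Q\cap\P(\ker\mu_F)$ at $[\sigma]$. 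As $\sigma,\bar\sigma\in\ker\mu_F$ and $q(\sigma,\bar\sigma)\neq0$, the differential $q(\sigma,\cdot)$ of $Q$ is nonzero on $\ker\mu_F$; hence $[\sigma]$ is a smooth point of the hyperquadric $Q\cap\P(\ker\mu_F)$ of $\P(\ker\mu_F)$, and the obvious dimension count gives $\codim_Q\bigl(Q\cap\P(\ker\mu_F)\bigr)=\rk\mu_F$. Transporting through $\mathcal{P}$, the germ $S_{\cup[F]}$ is smooth of codimension $\rk\mu_F$ in $\defx$.

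It remains to prove $\Hdg_F=S_{\cup[F]}$, and this is the step I expect to be the main obstacle. Recall that $\Hdg_F=\{t:[F]\in H^{k,k}(X_t)\}$, which for the real flat class $[F]$ coincides with $\{t:[F]\in F^k\scrH^{2k}_t\}$. I would first establish the inclusion $S_{\cup[F]}\subseteq\Hdg_F$ via the $\mathfrak{sl}_2$-action of the symplectic form (Verbitsky--Fujiki): writing $L_\sigma\colon H^{p,q}\rightarrow H^{p+2,q}$, hard Lefschetz for $\sigma$ makes $L_{\sigma_t}$ injective on $H^{p,q}$ for $p<n$, so on $S_{\cup[F]}$ the relations $\sigma_t\cup[F]=0=\bar\sigma_t\cup[F]$ kill every Hodge component of $[F]$ with $p<n$ or $q<n$. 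At the infinitesimal level, contracting $\sigma\cup[F]=0$ with a Kodaira--Spencer class $\eta\in H^1(X,T_X)$ and using $\iota_\eta\sigma\in H^{1,1}$ (the image of $\eta$ under $T_X\cong\Omega^1_X$) yields
\[
\mu_F(\iota_\eta\sigma)=(\iota_\eta\sigma)\cup[F]=-\,L_\sigma\bigl(\iota_\eta[F]\bigr),
\]
whence $T_{[X]}\Hdg_F=\ker(\eta\mapsto\iota_\eta[F])\subseteq\ker(\mu_F|_{H^{1,1}})=T_{[X]}S_{\cup[F]}$. Granting the inclusion $S_{\cup[F]}\subseteq\Hdg_F$, the reverse containment of tangent spaces follows, the two tangent spaces agree, and since $S_{\cup[F]}$ is smooth of codimension $\rk\mu_F$ the germs coincide. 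The genuine difficulty is that both the Lefschetz pinning and the injectivity of $L_\sigma$ on the image of $\eta\mapsto\iota_\eta[F]$ only confine $[F]$ to the band $p,q\ge n$ rather than to the single summand $H^{k,k}$ once $k>n$, that is, when $F$ is strictly isotropic; this is automatic in Voisin's Lagrangian case $k=n$, and in general is precisely where the full symplectic representation theory of $H^\ast(X)$ must be brought in.
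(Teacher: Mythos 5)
Your treatment of the smoothness and the codimension of $S_{\cup[F]}$ is correct, and it is exactly the mechanism of \cite[Proposition 1.2]{Voi89} that the paper invokes: $\mu_F$ kills $H^{2,0}\oplus H^{0,2}$ because $F$ is isotropic and $[F]$ is real, and by local Torelli $S_{\cup[F]}$ is the preimage of the linear section $Q\cap \P(\ker\mu_F)$ of the period quadric, which is smooth at $[\sigma]$ of codimension $\rk\mu_F$ since $\bar\sigma\in\ker\mu_F$ and $q(\sigma,\bar\sigma)\neq 0$. The tangent-space containment $T_{[X]}\Hdg_F\subseteq T_{[X]}S_{\cup[F]}$ is also fine.

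The genuine gap is the one you flag yourself at the end: the set-theoretic inclusion $S_{\cup[F]}\subseteq \Hdg_F$. Without at least one containment of germs, the equality of tangent spaces together with smoothness of $S_{\cup[F]}$ concludes nothing, so this step cannot be deferred. Writing $c$ for the codimension of $F$, so that $[F]\in H^{2c}(X,\R)$ is of type $(c,c)$ at the central point with $c\geq n$, the double hard Lefschetz argument ($L_{\sigma_t}$ injective on $H^{p,q}$ for $p<n$, $L_{\bar\sigma_t}$ for $q<n$) only confines the Hodge components of the flat class $[F]$ to the band $p,q\geq n$. When $c=n$ (the Lagrangian case) that band is the single summand $H^{n,n}$ and the argument closes; when $c>n$ the band still contains off-diagonal pieces such as $H^{c+1,c-1}$, so $\sigma_t\cup[F]=\bar\sigma_t\cup[F]=0$ does not by itself force $[F]$ to be of type $(c,c)$. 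Since the passage from Lagrangian to general isotropic subvarieties is precisely the content of this proposition beyond \cite[Proposition 1.2]{Voi89}, leaving this step open means the statement is not proved in the generality claimed. (The paper's own proof is a one-line assertion that Voisin's argument adapts verbatim, so it gives no guidance here either; to close the gap one needs more of the structure of $H^{2c}(X_t)$ under the two commuting $\mathfrak{sl}_2$-actions generated by $L_{\sigma_t}$ and $L_{\bar\sigma_t}$ --- for instance, writing the surviving components as $\sigma_t^{p-n}\bar\sigma_t^{q-n}$-translates of bi-primitive classes and checking that the remaining cup-product relations kill the off-diagonal ones --- or an independent proof of one of the two containments, e.g.\ via the identification $\ker\mu_F=\ker j^*$ of \cite[Lemme 1.5]{Voi89}.)
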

\begin{proof}
One can argue exactly as in \cite[Proposition 1.2]{Voi89}, replacing the Lagrangian subvariety, its cohomology class and its dimension $n$, with the isotropic subvariety $F$, its cohomology class and its dimension $k$. 
\end{proof}

We also have the following description.
\begin{prop}\label{proposition voisin2}
With the notation above we have that $\Hdg_F$ coincides with the locus
\begin{equation}\label{eq hodge locus2}
\{t\in S: (j_t)^*\sigma_t=0\}
\end{equation}
where $(j_t)^*$ denotes the composition of the inclusion $j:F\hookrightarrow X$
with the isomorphism $H^2(X_t,\C)\cong H^2(X,\C)$
\end{prop}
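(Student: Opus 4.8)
The plan is to show that the two loci coincide by proving the equivalence $\sigma_t \cup [F] = 0 \iff (j_t)^*\sigma_t = 0$, after which the claim follows immediately from Proposition \ref{proposition voisin1}. The implication $(j_t)^*\sigma_t = 0 \implies \sigma_t \cup [F] = 0$ should be the elementary direction: if the restriction of $\sigma_t$ to $F$ vanishes in $H^2(F,\C)$, then by the projection formula (compatibility of cup-product with the Gysin pushforward $j_*$) we have $\sigma_t \cup [F] = \sigma_t \cup j_*1 = j_*(j^*\sigma_t) = j_*((j_t)^*\sigma_t) = 0$ in $H^{2k+2}(X,\C)$, where I am using that $[F] = j_*1$ and that the flat identifications are compatible with these operations. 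I would spell out that the isomorphism $H^2(X_t,\C) \cong H^2(X,\C)$ used to define $(j_t)^*$ is precisely the flat trivialization of $\scrH^2$ used to make sense of $s \cup [F]$, so the two formulations genuinely refer to the same data over $\Def(X)$.

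For the converse $\sigma_t \cup [F] = 0 \implies (j_t)^*\sigma_t = 0$, the natural approach is to use the nondegeneracy of the intersection pairing together with the Hodge-theoretic structure on $F$, exactly in the spirit of \cite[Proposition 1.2]{Voi89}. First I would restrict the cohomology class: write $\alpha := (j_t)^*\sigma_t \in H^2(F,\C)$, which by construction is the restriction of a $(2,0)$-class, hence $\alpha \in H^{2,0}(F) = H^0(F,\Omega_F^2)$ (since the Hodge decomposition is compatible with restriction to a submanifold and $\sigma_t$ is of type $(2,0)$). The hypothesis $\sigma_t \cup [F] = 0$ means $j_*(\alpha) = 0$ in $H^{2k+2}(X,\C)$. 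The key point is to pair this vanishing against suitable test classes on $X$ to deduce that $\alpha$ itself must vanish. Concretely, for any $\beta \in H^{2n-2k-2}(X,\C)$ the projection formula gives $0 = \int_X j_*(\alpha) \cup \beta = \int_F \alpha \cup j^*\beta$, so one must argue that the restriction map $H^{2n-2k-2}(X,\C) \to H^{2n-2}(F,\C)$ has image large enough that the perfect Poincar\'e pairing on $F$ forces $\alpha = 0$.

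The main obstacle, as in Voisin's argument, is precisely this surjectivity-type statement: one must ensure that the restrictions $j^*\beta$ hit enough of $H^{2n-2}(F,\C)$ to detect a nonzero holomorphic $2$-form on $F$ via Poincar\'e duality on the $k$-dimensional $F$. Here the isotropy of $F$ enters crucially, because $\dim F = k$ means $H^{2k}(F)$ is the top degree and Poincar\'e duality on $F$ pairs $H^2(F)$ with $H^{2k-2}(F)$; I would want to pair $\alpha$ with powers of the symplectic form or with restrictions of K\"ahler classes, exploiting that $(\sigma_t)|_F = \alpha$ is itself of type $(2,0)$ and that $\ol{\alpha}$ is available as a test class. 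The cleanest route is to observe that $\int_F \alpha \cup \ol{\alpha} \cup \omega^{k-2} \geq 0$ with equality forcing $\alpha = 0$ for a K\"ahler class $\omega$ on $X$ restricted to $F$, provided one can realize $\ol{\alpha} \cup \omega^{k-2}$ as the restriction of a class on $X$ — which follows since $\ol{\sigma_t}$ and $\omega$ are restrictions of global classes. I expect the delicate bookkeeping to be keeping track of the cohomological degrees so that the Gysin/restriction maps land in the correct groups, and verifying that the positivity of the relevant Hermitian pairing on holomorphic forms of $F$ genuinely yields $\alpha = 0$; but since this is a verbatim adaptation of \cite[Proposition 1.2]{Voi89} with $n$ replaced by $k$, I would ultimately invoke that argument rather than reprove it in full.
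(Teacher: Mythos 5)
Your overall reduction is exactly the paper's: combine Proposition \ref{proposition voisin1} with the equivalence $\sigma_t\cup[F]=0 \iff (j_t)^*\sigma_t=0$, the easy direction being the projection formula and the hard direction being Hodge-theoretic positivity. (The paper disposes of this in one line by citing \cite[Lemme 1.5 et Remarque 1.6]{Voi89}, i.e.\ the kernel identity $\ker(\mu_F)=\ker(j^*)$ inside $H^2(X,\C)$; note that the relevant reference is that lemma, not \cite[Proposition 1.2]{Voi89}.) However, your sketch of the hard direction contains a step that fails as written: you assert that $\alpha:=(j_t)^*\sigma_t$ lies in $H^{2,0}(F)=H^0(F,\Omega^2_F)$ because it is ``the restriction of a $(2,0)$-class''. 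For $t\neq 0$ this is false. The class $\sigma_t$ is of type $(2,0)$ for the complex structure of $X_t$, but $(j_t)^*$ is by definition the flat (Gauss--Manin) transport to $H^2(X,\C)$ followed by restriction to the \emph{fixed} submanifold $F\subset X=X_0$; flat transport does not preserve Hodge types, so $\alpha$ is just some class in $H^2(F,\C)$ with no pure type for $F$'s complex structure. (Indeed, at $t=0$ one has $\alpha=0$ by isotropy, so the only nontrivial cases are exactly those where your type argument breaks down.) Consequently the positivity $\int_F\alpha\cup\ol{\alpha}\cup\omega^{k-2}>0$ for $\alpha\neq 0$, which is a Hodge--Riemann statement for classes of pure type (and, for $(1,1)$-classes, requires the Lefschetz decomposition as well), cannot be applied directly to $\alpha$.

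The repair is the one Voisin actually carries out: prove the $t$-independent linear statement $\ker(\mu_F)=\ker(j^*)$ as subspaces of the fixed vector space $H^2(X,\C)$, by taking an \emph{arbitrary} class $\eta$ with $\eta\cup[F]=0$, decomposing it into Hodge components on $X_0$ (each component again killed by $\mu_F$ since $[F]$ is of type $(p,p)$), and applying the Hodge--Riemann relations on the compact K\"ahler manifold $F$ to each piece of $j^*\eta$ separately. Since $\mu_F$, $j^*$ and the flat trivialization are purely topological, this kernel identity then applies verbatim to the transported class $\sigma_t\in H^2(X,\C)$ for every $t$, which is what Proposition \ref{proposition voisin2} needs. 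A final minor point: your test classes should live in $H^{2k-2}(X,\C)$ and restrict into the degree $2k-2$ complement of $H^2(F,\C)$ under Poincar\'e duality on the $k$-dimensional manifold $F$; the degree $2n-2k-2$ you wrote is not the right one.
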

\begin{proof}
As in \cite[Proposition 1.7]{Voi89} this follows from Proposition \ref{proposition voisin1} together with 
\cite[Lemme 1.5 and Remarque 1.6]{Voi89}.
\end{proof}

As a consequence, the tangent space to the Hodge locus is given by

\begin{prop}\label{proposition voisin3}
Let $F \subset X$ be an isotropic subvariety in a holomorphic symplectic manifold. Then $$T_{\Hdg_F,[X]}=\{v\in H^1(T_X) \mid j^*\sigma(v,\cdot) = 0 \textrm{ in } H^1(\Omega_F)\}$$
\end{prop}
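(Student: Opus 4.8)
The goal is to compute the tangent space $T_{\Hdg_F,[X]}$ to the Hodge locus at the base point, and by Proposition \ref{proposition voisin2} we already know $\Hdg_F$ coincides with the locus $\{t : (j_t)^*\sigma_t = 0\}$ where $(j_t)^*$ is the pullback $H^2(X_t,\C) \to H^2(F,\C)$ read through the flat identification $H^2(X_t,\C) \cong H^2(X,\C)$. The plan is to differentiate the defining equation $(j_t)^*\sigma_t = 0$ at $[X]$ along a tangent vector $v \in H^1(T_X) = T_{\defx,[X]}$, and to identify the resulting linearized condition with $j^*\sigma(v,\cdot) = 0$ in $H^1(\Omega_F)$.

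First I would recall how the class $[\sigma_t]$ varies with $t$. Since $\Hdg_F$ is cut out scheme-theoretically as a smooth subvariety (Proposition \ref{proposition voisin1}), its tangent space is the kernel of the derivative of the relevant section. Working in the variation of Hodge structure $\scrH^2$ over $\defx$, the infinitesimal variation of the Hodge filtration is governed by the cup product with the Kodaira-Spencer class: for $v \in H^1(T_X)$, the derivative of $s$ at $[X]$, projected appropriately, is contraction $v \lrcorner \sigma \in H^1(\Omega^1_X)$. The key point is that the restriction map $j^*$ is flat (independent of $t$ after the identification), so differentiating $(j_t)^*\sigma_t = 0$ simply restricts this variation to $F$: the linearization reads $j^*(v \lrcorner \sigma) = 0$ in $H^1(\Omega^1_F)$, i.e. $j^*\sigma(v,\cdot) = 0$. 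Here I interpret $\sigma(v,\cdot)$ as the image of $v$ under the map $T_X \to \Omega^1_X$ induced by the symplectic form, inducing on $H^1$ the map $H^1(T_X) \to H^1(\Omega^1_X)$, then restricted to $F$.

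More concretely, I would argue as follows. The composition $j^*$ of Proposition \ref{proposition voisin2} sends the varying symplectic class to a class in $H^2(F,\C)$, and since $F$ is isotropic we have $j^*\sigma = 0$ at $t=[X]$; thus the locus $\{(j_t)^*\sigma_t = 0\}$ is the zero locus of a section of the bundle with fiber $H^2(F,\C)$ built from the composite map, and its tangent space is the kernel of the derivative. Computing this derivative amounts to the standard Griffiths-style identification of the infinitesimal period map with cup product by the Kodaira-Spencer class followed by the symplectic contraction; precomposing with $j^*$ yields exactly the stated condition in $H^1(\Omega_F)$. The passage between the $(1,1)$-component living in $H^1(\Omega^1_F)$ and the full $H^2(F,\C)$-valued equation is harmless because, by isotropy, the relevant obstruction lives in the appropriate graded piece.

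The main obstacle, and the only genuinely delicate point, is keeping careful track of the two identifications being used simultaneously: the flat (Gauss-Manin) trivialization $H^2(X_t,\C) \cong H^2(X,\C)$ through which $(j_t)^*$ is defined, versus the holomorphic variation of the Hodge filtration which makes $[\sigma_t]$ move. The derivative of $(j_t)^*\sigma_t$ in the flat frame is entirely captured by the movement of $[\sigma_t]$ inside the fixed vector space $H^2(X,\C)$, and that movement is precisely $v \lrcorner \sigma$ up to a holomorphic component that dies after projecting to the Dolbeault group $H^1(\Omega^1_F)$. I expect the cleanest route is to cite the computation already carried out in \cite[Proposition 1.7 and Lemme 1.5]{Voi89} for the Lagrangian case, which transfers verbatim upon replacing $n$ by $k$ and $X$ by the isotropic $F$, so that the proof reduces to verifying that Voisin's argument uses nothing specific to the Lagrangian (maximal-dimension) hypothesis beyond isotropy of $F$.
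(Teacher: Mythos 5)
Your proposal is correct and follows essentially the same route as the paper: the paper's proof likewise differentiates the defining equation $(j_t)^*\sigma_t=0$ of Proposition \ref{proposition voisin2} using the Gau\ss-Manin connection (citing the corresponding computation in \cite{Voi89}), obtaining $j^*\sigma(v,\cdot)=0$ in $H^1(\Omega_F)$. Your more detailed unpacking of the flat trivialization versus the variation of the Hodge filtration, and of why the linearized condition lands in $H^1(\Omega^1_F)$, is a faithful expansion of the same argument.
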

\begin{proof}
As in \cite[2.2]{Voi89}, this is a direct consequence of \eqref{eq hodge locus2}. Indeed, by differentiating the equation there we obtain the equation $j^*\sigma(v,\cdot) = 0 $ from the properties of the Gau\ss -Manin connection.
\end{proof}

\section{Deformations}\label{sec defo}

A deformation of a compact complex variety $X$ is a flat morphism $\scrX \to S$ of complex spaces to a pointed space $(S,0)$ such that the fiber $\scrX_0$ over $0\in S$ is isomorphic to $X$. 
We will mostly work with space germs but usually take representatives of these germs to work with honest complex spaces and shrink them whenever necessary to small neighborhoods of the central fiber. More precisely, in many situations we choose a deformation over a contractible open subset as a representative of the (uni-)versal deformation of $\scrX\to\Def(X)$.
\subsection{Preparations}\label{sec:prepa}

We start with the following easy remarks.
\begin{lemma}\label{lemma constant cycle}
Let $F$ be a connected, closed 
submanifold of a compact K\"ahler manifold $X$ such that $h^{s,0}(F)=0$ for $1 \leq s \leq p$. Let $\gothF \subset S\times X$ be a deformation of $F$ over a connected complex manifold $S$. If $\pi: \gothF \to S$ is smooth, then $H^0(\gothF,\Omega_\gothF^p)=H^0(S,\Omega_S^p)$ for all $p\in \Z$.
\end{lemma}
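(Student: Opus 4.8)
The plan is to establish the lemma sheaf-theoretically, by computing the direct image $\pi_*\Omega^p_\gothF$ and showing it is canonically isomorphic to $\Omega^p_S$; the stated equality then follows at once since $H^0(\gothF,\Omega^p_\gothF)=H^0(S,\pi_*\Omega^p_\gothF)$. First I would record the two facts that make the machinery applicable. Since $\gothF$ is a closed analytic subset of $S\times X$ with $X$ compact, $\pi$ is the restriction of the proper projection $S\times X\to S$, hence \emph{proper}. Moreover each fibre sits as $\gothF_t\subset\{t\}\times X\cong X$, so it is a compact complex submanifold of the K\"ahler manifold $X$ and is therefore itself compact K\"ahler.

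Next I would use the relative cotangent sequence $0\to\pi^*\Omega^1_S\to\Omega^1_\gothF\to\Omega^1_{\gothF/S}\to 0$ (available because $\pi$ is smooth). Its $p$-th exterior power equips $\Omega^p_\gothF$ with a decreasing filtration $F^\bullet$ satisfying $F^0=\Omega^p_\gothF$, $F^{p+1}=0$ and with graded pieces $\mathrm{gr}^a=\pi^*\Omega^a_S\otimes\Omega^{p-a}_{\gothF/S}$ for $0\le a\le p$. By the projection formula $\pi_*\mathrm{gr}^a=\Omega^a_S\otimes\pi_*\Omega^{p-a}_{\gothF/S}$, so the whole computation reduces to proving the vanishing $\pi_*\Omega^q_{\gothF/S}=0$ for $1\le q\le p$, together with $\pi_*\mathcal O_\gothF=\mathcal O_S$.

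For the fibrewise vanishing I would combine deformation invariance of Hodge numbers with base change. By Ehresmann's theorem the smooth proper family $\pi$ is a $C^\infty$ fibre bundle over the connected base $S$, so the Betti numbers of the fibres are constant; as the fibres are K\"ahler, the upper semicontinuous Hodge numbers $h^{q,0}(\gothF_t)$ have constant fibrewise sums in each total degree, which forces each $h^{q,0}(\gothF_t)$ to be constant on $S$. Since $h^{q,0}(\gothF_0)=h^{q,0}(F)=0$ for $1\le q\le p$, we obtain $H^0(\gothF_t,\Omega^q_{\gothF_t})=0$ for all $t\in S$ and all such $q$, and Grauert's theorem then shows the locally free sheaf $\pi_*\Omega^q_{\gothF/S}$ has rank $0$, i.e. vanishes; the same argument in degree $q=0$ gives $\pi_*\mathcal O_\gothF=\mathcal O_S$ (the fibres being connected). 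Applying the left-exact functor $\pi_*$ to $0\to F^{a+1}\to F^a\to\mathrm{gr}^a\to 0$ and using $\pi_*\mathrm{gr}^a=0$ for $0\le a\le p-1$ yields isomorphisms $\pi_*F^{a+1}\cong\pi_*F^a$, so that $\pi_*\Omega^p_\gothF=\pi_*F^0\cong\pi_*F^p$. Finally $F^p=\mathrm{gr}^p=\pi^*\Omega^p_S$, whence $\pi_*F^p=\Omega^p_S\otimes\pi_*\mathcal O_\gothF=\Omega^p_S$, giving $\pi_*\Omega^p_\gothF\cong\Omega^p_S$ via the canonical pullback map; taking global sections concludes. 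The degenerate degrees are immediate, both sides vanishing for $p<0$ and agreeing for $p=0$.

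The main obstacle is the fibrewise vanishing $H^0(\gothF_t,\Omega^q_{\gothF_t})=0$ over the \emph{whole} connected base, rather than merely near $0$: this relies on the constancy of Hodge numbers in the K\"ahler family $\pi$, which must then be transferred to the vanishing of the coherent sheaves $\pi_*\Omega^q_{\gothF/S}$ through Grauert's base change theorem in the analytic category. Everything here hinges on the properness of $\pi$, which is exactly what the compactness of $X$ supplies.
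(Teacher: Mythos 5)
Your proof is correct and follows essentially the same route as the paper: the filtration of $\Omega^p_\gothF$ induced by the relative cotangent sequence, the reduction to the vanishing of $\pi_*\Omega^s_{\gothF/S}$ for $1\le s\le p$ (and $\pi_*\sO_\gothF=\sO_S$), and the deformation invariance of $h^{s,0}$ on the fibers. The only difference is that you justify this invariance by the classical K\"ahler argument (Ehresmann, constancy of Betti numbers, upper semicontinuity plus Hodge decomposition, then Grauert base change), whereas the paper invokes Deligne's degeneration theorem and its analytic analogue; both are standard and adequate.
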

\begin{proof}
The exact sequence
$ 0\to \pi^*\Omega_S \to \Omega_\gothF \to \Omega_{\gothF/S} \to 0$
gives rise to a filtration on $\Omega_\gothF^p$ whose graded pieces are the $\pi^*\Omega_S^r \tensor \Omega_{\gothF/S}^s$ such that $r+s=p$. Now $H^0(\gothF,\pi^*\Omega_S^r \tensor \Omega_{\gothF/S}^s)=H^0(S,\Omega_S^r \tensor \pi_*\Omega_{\gothF/S}^s)=0$ for $s\neq 0$ as $\pi_*\Omega^s_{\gothF/S}$ is the zero sheaf. This in turn is deduced from the vanishing of $H^0(F,\Omega^s_F)$ for $0\neq s \leq p$ and the invariance of $h^0(F,\Omega^s_F)$ under deformation.  The last property follows from \cite[Th\'eor\`eme 5.5]{Deligne} when $X$ is an algebraic variety and
when $X$ is a K\"ahler manifold, the argument of \cite[Th\'eor\`eme 5.5]{Deligne} works literally, we only have to replace the reference to \cite[Thm 6.10.5]{EGAIII2} by \cite[Ch~3, Thm 4.1]{BS} and the reference to \cite[7.8.5]{EGAIII2} by \cite[Ch~3, Cor 3.10]{BS}.
The lemma follows now from connectedness of~$F$.
\end{proof}
\begin{lemma}\label{lemma remains isotropic}
Let $F\subset X$ be a submanifold of a symplectic manifold and let $\gothF \into \gothX$ be a deformation of $F\into X$ over a connected base scheme $S$ such that $\gothX \to S$ and $\gothF\to S$ are smooth. If $F$ is isotropic, then $\gothF$ remains isotropic for every relative holomorphic $2$-form on $\gothX$.
\end{lemma}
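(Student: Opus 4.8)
The plan is to restrict the given relative $2$-form to $\gothF$ and to force the vanishing of its restriction fibrewise by controlling a cohomology class. Write $p\colon\gothX\to S$ and $\pi\colon\gothF\to S$ for the two smooth morphisms, $j\colon\gothF\into\gothX$ for the inclusion, and $j_s\colon\gothF_s\into\gothX_s$ for the inclusions of fibres. Given a relative holomorphic $2$-form $\omega$ on $\gothX$, I would first form its pullback $\alpha:=j^*\omega$, a relative holomorphic $2$-form on $\gothF$; the assertion to be proven is precisely that $\alpha_s=\omega_s|_{\gothF_s}=0$ for all $s\in S$. Since $\pi$ is smooth, every fibre $\gothF_s$ is compact K\"ahler, so $\alpha_s$ is harmonic; in particular the natural map $H^0(\gothF_s,\Omega^2_{\gothF_s})\to H^2(\gothF_s,\C)$ is injective, and it therefore suffices to show that the class $[\alpha_s]=j_s^*[\omega_s]\in H^2(\gothF_s,\C)$ vanishes for every $s$.

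The key step is to observe that the mere existence of the family $\gothF\subset\gothX$ confines us to the Hodge locus. For each $s$ the fundamental class $[\gothF_s]\in H^{2k}(\gothX_s,\C)$ is the class of a complex submanifold, hence of type $(k,k)$; and since $\gothF\to S$ is a family, these classes constitute a flat section of $R^{2k}p_*\C$ which, under the Gau\ss--Manin identifications, equals $[F]$. Thus the classifying map $\tau\colon S\to\Def(X)$ of $\gothX\to S$ (defined after shrinking $S$) has image contained in $\Hdg_F$, the locus where $[F]$ remains of type $(k,k)$.

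I would then invoke Proposition~\ref{proposition voisin2}, which identifies $\Hdg_F$ with the locus where the restriction of the symplectic class vanishes: pulling its conclusion back along $\tau$ gives $j_s^*[\sigma_s]=0$ in $H^2(\gothF_s,\C)$ for all $s\in S$. As $X$ is holomorphic symplectic, deformation invariance of Hodge numbers yields $h^0(\gothX_s,\Omega^2_{\gothX_s})=1$, so on each fibre $\omega_s$ is a scalar multiple of $\sigma_s$; hence $[\alpha_s]=j_s^*[\omega_s]=0$, and by the harmonicity reduction $\alpha_s=0$. Connectedness of $S$ then gives the vanishing on every fibre.

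The main obstacle is conceptual rather than computational: one cannot simply argue that isotropy propagates ``by continuity'' from the central fibre, because the section $s\mapsto j_s^*[\omega_s]$ of the Hodge bundle is \emph{not} flat---Griffiths transversality produces a nonzero covariant derivative of $[\omega_s]$---so vanishing at a single point carries no local information. The substance of the argument is therefore the passage, via Hodge-type invariance of the cycle class, from the existence of the family of submanifolds to the inclusion $\tau(S)\subseteq\Hdg_F$; once this is in place, Voisin's description of the Hodge locus does the rest. The ancillary points---flatness of the fundamental classes, compatibility of $j_s^*$ with the Gau\ss--Manin connection, and the harmonic reduction on the compact K\"ahler fibres---are routine.
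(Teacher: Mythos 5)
There is a genuine gap: your argument is entirely fibrewise over the closed points of $S$, but the lemma is stated for an arbitrary connected base \emph{scheme} and is applied in the proof of Theorem \ref{thm defo fiber} precisely to local Artinian bases $S=\Spec R$ arising in the $T^1$-lifting argument. Over such an $S$ there is a single closed point, its fibre is the original pair $F\subset X$, and the vanishing of $\omega_s\vert_{\gothF_s}$ at that point is just the hypothesis that $F$ is isotropic; the whole content of the lemma lies in the infinitesimal (nilpotent) directions, where ``for all $s\in S$'' carries no information. Your closing paragraph correctly identifies the difficulty --- the section $s\mapsto j_s^*[\omega_s]$ is not flat --- but the route through the set-theoretic inclusion $\tau(S)\subseteq\Hdg_F$ does not resolve it: to conclude anything over a non-reduced base you would need the classifying map to factor through $\Hdg_F$ \emph{scheme-theoretically} and then pull back the identity of Proposition \ref{proposition voisin2} as an identity of sections of a vector bundle, and establishing that factorization over an Artin ring is essentially equivalent to the statement being proved. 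The paper's proof attacks exactly this point: it reduces to affine, then Artinian $S$, considers the restriction map $\vrho\colon H^2_\dR(\gothX/S)\to H^2_\dR(\gothF/S)$ of locally free $\sO_S$-modules (Deligne, \cite{Deligne}), and invokes \cite[Theorems 4.17 and 4.23]{L2} to show that the cokernel of the graded piece $\Gr^2\vrho$ is $R$-free, so that its vanishing on the central fibre propagates over all of $S$, nilpotents included.

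Two smaller points. First, your step ``$h^0(\Omega^2_{\gothX_s})=1$, hence $\omega_s$ is a multiple of $\sigma_s$'' uses irreducibility of the holomorphic symplectic fibres; the lemma (and the paper's proof) needs only compactness of the fibres and applies to every relative holomorphic $2$-form without assuming $h^{2,0}=1$. Second, for a \emph{reduced} connected base your argument is essentially correct, since fibrewise vanishing of a section of $\Omega^2_{\gothF/S}$ does then imply global vanishing; so what you have proved is a strictly weaker statement than the one the paper actually uses.
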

\begin{proof}
We look at the restriction morphism in de Rham cohomology $\vrho:H^2_\dR(\gothX/S)\to H^2_\dR(\gothF/S)$ where $H^2_\dR(\cdot/S)$ is the second direct image of the complex $\Omega_{\cdot/S}^\bullet$ which is a locally free sheaf on $S$ by Deligne's theorem \cite[Th\'eor\`eme 5.5]{Deligne}. We have to show that the restriction of $\vrho$ to the Hodge filtration $F^2H^2_\dR(\gothX/S)$ vanishes identically.
As in \cite{Deligne} one successively reduces to the case where $S$ is affine respectively Artinian. Then by \cite[Theorem 4.23]{L2} the cokernel is $\sO_S$-free and by \cite[Theorem 4.17]{L2} the same is true for the cokernels of the graded pieces. So $\Gr^2\vrho$ is the zero map as it is so on the central fiber. The claim follows as $\Gr_F^2 = F^2$.
\end{proof}
\begin{lemma}\label{lemma isotropic}
Let $F \subset X$ be an isotropic submanifold of a symplectic manifold $X$ and let $\gothF \into \gothX$ be a deformation of $F\into X$ over a connected base scheme $S$ such that $\gothX \to S$ and $\gothF\to S$ are smooth and such that there is a relative symplectic form $\sigma$ on $\gothX$ extending the one on $X$. 
If $\gothF$ remains isotropic for $\sigma$, then there is a commutative diagram
\[
\xymatrix{
T_{\gothX/S} \ar[r]\ar[d] & N_{\gothF/\gothX}\ar[d]^\vphi \\
\Omega_{\gothX/S} \ar[r] & \Omega_{\gothF/S}\\
}
\]
where the left vertical isomorphism is induced by the symplectic form. In particular, $\vphi$ is surjective. If moreover $P \subset X$ is a coisotropic subvariety such that $F \subset P$ is a general fiber of the coisotropic fibration of $P$, then the restriction of $\vphi$ to $F$ is the composition of $N_{F/X} \to N_{P/X}\vert_F$ and an isomorphism $N_{P/X}\vert_F \isom \Omega_F$.
\end{lemma}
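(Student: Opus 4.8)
The plan is to build the commutative square by comparing two short exact sequences and identifying the vertical arrows via the symplectic form. First I would set up the conormal/normal sequences: for the smooth inclusion $\gothF\into\gothX$ over $S$ we have the relative conormal sequence $N_{\gothF/\gothX}^\vee \to \Omega_{\gothX/S}\vert_\gothF \to \Omega_{\gothF/S}\to 0$, and dually the sequence $0\to T_{\gothF/S}\to T_{\gothX/S}\vert_\gothF \to N_{\gothF/\gothX}\to 0$. The symplectic form $\sigma$ on $\gothX/S$, being everywhere non-degenerate, induces an $\sO_{\gothX}$-linear isomorphism $\sigma^\flat\colon T_{\gothX/S}\isom\Omega_{\gothX/S}$, which is the left vertical map in the diagram. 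Restricting to $\gothF$ and composing with the projection $T_{\gothX/S}\vert_\gothF\to N_{\gothF/\gothX}$ on one side and $\Omega_{\gothX/S}\vert_\gothF\to\Omega_{\gothF/S}$ on the other, the content of the statement is that these two compositions agree under a map $\vphi\colon N_{\gothF/\gothX}\to\Omega_{\gothF/S}$, i.e.\ that $\sigma^\flat$ descends to the normal bundle.

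The key point making this work is the isotropy hypothesis. Because $\gothF$ is isotropic for $\sigma$, the restriction $\sigma\vert_{\gothF/S}=0$, which means precisely that $\sigma^\flat$ sends $T_{\gothF/S}$ into the kernel of $\Omega_{\gothX/S}\vert_\gothF\to\Omega_{\gothF/S}$, namely into the image of $N_{\gothF/\gothX}^\vee$. Equivalently, the composite $T_{\gothF/S}\into T_{\gothX/S}\vert_\gothF \to \Omega_{\gothX/S}\vert_\gothF \to \Omega_{\gothF/S}$ vanishes, so $\sigma^\flat$ factors through the quotient $N_{\gothF/\gothX}=T_{\gothX/S}\vert_\gothF/T_{\gothF/S}$, yielding a well-defined $\vphi\colon N_{\gothF/\gothX}\to\Omega_{\gothF/S}$ and the commutativity of the square. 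For the surjectivity of $\vphi$ I would compare ranks: $\sigma^\flat$ is an isomorphism, the top map $T_{\gothX/S}\vert_\gothF\to N_{\gothF/\gothX}$ is surjective, and the bottom map $\Omega_{\gothX/S}\vert_\gothF\to\Omega_{\gothF/S}$ is surjective, so $\vphi$ is surjective by a diagram chase; by isotropy the two bundles $N_{\gothF/\gothX}$ and $\Omega_{\gothF/S}$ have the same rank $2n-k$ and $k$ respectively—here one must be careful, since in the merely isotropic (non-Lagrangian) case these ranks differ, so $\vphi$ is surjective but not an isomorphism.

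For the last assertion, I would work fiberwise over the central point, restricting to $F$ and using that $F$ is a general fiber of the coisotropic fibration $\phi\colon P\ratl B$. By Definition \ref{def:main} and the second item of the Remark following it, there is a generically non-degenerate $2$-form $\sigma_B$ on the base with $\sigma\vert_P=\phi^*\sigma_B$, and along a general fiber $T_{P/B}=T_P^\perp$. The inclusion $F\subset P$ gives a factorization $N_{F/X}\to N_{P/X}\vert_F$ of the normal bundles, and I would identify $\sigma^\flat$ restricted along this filtration: the radical $T_P^\perp=T_{P/B}$ is exactly the kernel-directions killed when passing to $B$, so $\sigma^\flat$ induces via $\sigma_B$ an isomorphism $N_{P/X}\vert_F\isom\Omega_F$ (both of rank $k$, since $P$ has codimension $k$ and $F=\phi^{-1}(b)$ has dimension $k$). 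Composing $N_{F/X}\to N_{P/X}\vert_F$ with this isomorphism recovers the restriction of $\vphi$ to $F$.

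The main obstacle I anticipate is the bookkeeping in the last step: making the identification $N_{P/X}\vert_F\isom\Omega_F$ genuinely canonical and checking that it is compatible with $\vphi$ requires carefully tracking how the symplectic form interacts with the two-step filtration $T_F\subset T_P\vert_F\subset T_X\vert_F$ and its dual, using that $T_P^\perp\subset T_P$ (coisotropy) together with $F$ isotropic forces $T_F\subset T_F^\perp=T_P$ along the fiber. The linear-algebra core is elementary, but one must confirm that the rational map $\phi$ and the relative form $\sigma_B$ behave well enough along the general \emph{smooth} fiber $F$ (guaranteed by $P$ being smoothly algebraically coisotropic and by smoothness of $P$ near $F$ from the first item of the Remark), so that the fiberwise isomorphism glues to a bundle isomorphism on $F$.
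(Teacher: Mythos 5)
Your argument is correct and follows essentially the same route as the paper's: the symplectic isomorphism $T_{\gothX/S}\isom\Omega_{\gothX/S}$ carries $T_{\gothF/S}\subset T_{\gothF/S}^\perp$ into the conormal bundle $N_{\gothF/\gothX}^\vee$, so the map to $\Omega_{\gothF/S}$ factors through $N_{\gothF/\gothX}$ and is surjective, while for the last claim the identity $T_F=T_{P/B}=T_P^\perp$, hence $T_F^\perp=T_P\vert_F$, shows that $\ker(\vphi\vert_F)=N_{F/P}$ and the induced surjection $N_{P/X}\vert_F\to\Omega_F$ between bundles of rank $k$ is an isomorphism. The only cosmetic differences are that the paper phrases the image of $T_{\gothF/S}^\perp$ directly as $I_\gothF/I_\gothF^2$ and does not need the form $\sigma_B$ on the base, which is indeed inessential to your final step.
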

\begin{proof}
This is rather standard and we will only sketch the proof. It is clearly sufficient to verify the existence of the diagram after restriction to $\gothF$. As $\gothF$ is isotropic, $T_{\gothF/S}$ is contained in its orthogonal $T_{\gothF/S}^\perp$ with respect to $\sigma$. Consider the isomorphism $T_{\gothX/S} \to \Omega_{\gothX/S}$ induced by $\sigma$ and let $I_{\gothF}\subset \sO_\gothX$ be the ideal sheaf of $\gothF$. The image of $T_{\gothF/S}^\perp$ in $\Omega_{\gothX/S}$ is identified with 
$I_\gothF/I_{\gothF^2}$ so that it maps to zero in $\Omega_{\gothF/S}$. Hence, $\vphi$ exists and is surjective. To justify the last claim on simply has to observe that the kernel of $N_{F/X} \to N_{P/X}\vert_F$ is $N_{F/P}$ and this is a quotient of $T_F^\perp = T_P\vert_F$ by the coisotropicity of $P$.
\end{proof}

\subsection{Proof of the main results}\label{sec:proofs}

For the basic material on deformation theory we refer the reader to \cite{Ser}.
Let $f:\mathcal X\to \Def(X)$ be the universal deformation of $X$. Consider the relative Hilbert scheme (or rather the Douady space) of $\mathcal X\to \defx$ and let $\mathcal H$ be the union of those irreducible components of it which contain $F$. We endow $\sH$ with the smallest scheme structure that coincides with the Hilbert scheme in an open neighborhood of $[F]$ in $\sH$. As $\pi$ is proper, which may be seen similarly as \cite[Proposition 2.6]{GLR}, the scheme theoretic image $\deff$ of the natural morphism
$\pi :\mathcal H \to \defx$ is a closed subvariety contained in the Hodge locus $\Hdg_F\subset \defx$ of~$F$. 
\begin{thm}\label{thm defo fiber}
Let $X$ be a compact holomorphic symplectic manifold of dimension $2n$ 
and let $F\subset X$ be an isotropic submanifold of dimension $k=1,\ldots,n$ 
 which is a general fiber of some smoothly coisotropic fibration. If
 $(n-k)h^1(\mathcal O_F)=0$, then the following hold.
\begin{enumerate}
 \item\label{enum i} The space $\sH$ is smooth at $[F]$. 
 \item\label{enum ii} The morphism $\sH \to \Def(F,X)$ and  the subspace $\Def(F,X) \subset \Def(X)$  are smooth at $[F]$.
 \item\label{enum iii} If the representative of $\defx$ is chosen small enough, $\sH$ is irreducible and so is the general fiber of $\sH \to \deff$.
\end{enumerate}
\end{thm}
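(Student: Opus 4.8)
The plan is to deduce all three statements from a single application of the $T^1$-lifting principle, reading off (2) and (3) from the local structure obtained in (1). Near $[F]$ the space $\sH$ prorepresents the deformation functor of the pair $F\subset X$: for an Artinian base $A$, an $A$-point is an $A$-point of $\defx$ --- equivalently, by universality of $f\colon\mathcal X\to\defx$, a deformation of $X$ over $A$ --- together with a flat deformation $\gothF_A\subset\gothX_A$ of $F$. This functor is governed by the logarithmic tangent sheaf $\txf=\ker(T_X\to N_{F/X})$, with tangent space $H^1(X,\txf)$ and obstructions in $H^2(X,\txf)$. Since $X$ is holomorphic symplectic we have $H^0(X,T_X)\cong H^0(X,\Omega^1_X)=0$, so the sheaf sequence $0\to\txf\to T_X\to i_*N_{F/X}\to 0$ yields
$$0\to H^0(F,N_{F/X})\to H^1(X,\txf)\to H^1(X,T_X)\to[\delta] H^1(F,N_{F/X}),$$
exhibiting $T_{\sH,[F]}$ as an extension of $\ker\delta\subset T_{\defx,[X]}$ by the ``vertical'' directions $H^0(F,N_{F/X})$.

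To prove (1) I would verify the $T^1$-lifting property: for every deformation $\xi_n$ of the pair over $A_n=\C[t]/(t^{n+1})$ the restriction $T^1(\xi_n)\to T^1(\xi_{n-1})$ of relative first-order deformations is surjective, where $T^1(\xi_n)=H^1(\gothX_n,T_{\gothX_n/A_n}\langle\gothF_n\rangle)$. By cohomology and base change this holds once $H^1(\gothX_n,T_{\gothX_n/A_n}\langle\gothF_n\rangle)$ is $A_n$-free and compatible with base change, and I would establish this freeness in the spirit of Lemmas \ref{lemma remains isotropic} and \ref{lemma isotropic}: the relative symplectic form identifies $T_{\gothX_n/A_n}\langle\gothF_n\rangle$ with the subsheaf of $\Omega_{\gothX_n/A_n}$ cut out by the (deformation-invariant, by Lemma \ref{lemma remains isotropic}) isotropy condition, so that its cohomology is controlled by the relative Hodge filtration on de Rham cohomology, which is locally free by Deligne's theorem and has free graded cokernels by \cite[Theorems 4.17 and 4.23]{L2}. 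Here the hypothesis $(n-k)h^1(\sO_F)=0$ is decisive: by Lemma \ref{lemma isotropic} the normal bundle sits in an extension $0\to\sO_F^{\oplus(2n-2k)}\to N_{F/X}\to\Omega_F\to 0$, and the vanishing either removes the trivial summand ($k=n$) or forces $h^1(\sO_F)=0$, so that $H^1(\sO_F)^{\oplus(2n-2k)}=0$ and no jumping cohomology of $N_{F/X}$ obstructs the lifting; by Lemma \ref{lemma constant cycle} the relative holomorphic $1$-forms do not grow, which is exactly the input needed for base change. This gives smoothness of $\sH$, proving (1).

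For (2) I would first control the fibres of $\sH\to\deff$, which over a point $t$ are the components through $F_t$ of the Douady space of the fixed manifold $X_t$, with tangent space $H^0(F_t,N_{F_t/X_t})$. By the previous paragraph and the above extension, $h^0(N_{F_t/X_t})$ is constant: it equals $2n-2k$ when $k<n$ and $h^{1,0}(F_t)=h^1(\sO_{F_t})$ when $k=n$. In the former case the fibres $\phi_t^{-1}(b)$ of the coisotropic fibration already sweep out a reduced subvariety of the Douady space of dimension $2n-2k=\dim B_t$ equal to the tangent dimension, forcing the point $[F_t]$ to be smooth with the Douady component locally isomorphic to $B_t$; in the Lagrangian case $k=n$ the fibres are smooth by the unobstructedness of Lagrangian deformations. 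Thus the fibres of $\sH\to\deff$ are smooth of constant dimension, and together with the smoothness of $\sH$ this shows that $\sH\to\deff$ is smooth; as $\deff$ is then the base of a smooth surjection from the smooth germ $\sH$, it is itself smooth.

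Statement (3) follows formally: smoothness makes $\sH$ locally irreducible at $[F]$, so after shrinking the representative of $\defx$ the space $\sH$ is irreducible, and the general fibre of $\sH\to\deff$ is smooth of dimension $h^0(N_{F/X})$. For $k<n$ this fibre is identified, via $b\mapsto[\phi_t^{-1}(b)]$, with a dense open of the irreducible base $B_t$ of the coisotropic fibration $P_t\ratl B_t$, hence is irreducible; the Lagrangian case is handled as in \cite{Voi89}. The crux of the argument, and the step I expect to be hardest, is the freeness in the second paragraph: transporting $T_{\gothX_n/A_n}\langle\gothF_n\rangle$ through the symplectic isomorphism into genuinely Hodge-theoretic data so that Deligne degeneration and the results of \cite{L2} apply, and checking that $(n-k)h^1(\sO_F)=0$ indeed suppresses every contribution of $H^1(\sO_F)$ that could obstruct the $T^1$-lifting.
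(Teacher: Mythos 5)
Your treatment of part \eqref{enum i} follows the paper's route: the $T^1$-lifting criterion for $\sH$, the exact sequence relating $H^1(\gothtxf)$ to $H^0(N_{\gothF/\gothX})$ and to the Hodge-theoretic restriction map $H^1(\Omega_{\gothX/S})\to H^1(\Omega_{\gothF/S})$ (via Lemmas \ref{lemma remains isotropic} and \ref{lemma isotropic}), and the use of $(n-k)h^1(\sO_F)=0$ together with Deligne's theorem and \cite{L2} to get $R$-freeness. This part is essentially correct, though vague about the mechanism: the hypothesis is used to prove that $H^0(K)\to H^0(K\otimes_R\C)$ is surjective, hence that the kernel $K$ of $N_{\gothF/\gothX}\to\Omega_{\gothF/S}$ is the trivial bundle and $H^1(K)=0$; this is what makes $H^1(N_{\gothF/\gothX})\to H^1(\Omega_{\gothF/S})$ injective and lets the four-term sequence \eqref{eq txf sequence 2} terminate in purely Hodge-theoretic terms.

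The genuine gaps are in \eqref{enum ii} and \eqref{enum iii}. For \eqref{enum ii} you argue: the fibres of $\sH\to\deff$ are smooth of constant dimension and $\sH$ is smooth, hence the morphism, and then the base, are smooth. The implication ``smooth total space plus smooth equidimensional fibres implies a smooth morphism'' is false without flatness, and flatness is exactly what is at stake: miracle flatness presupposes a regular base, which is what you are trying to prove; and since $\deff$ is a scheme-theoretic image, its tangent space can strictly exceed the image of $d\pi$, so no dimension count closes the loop. The paper instead runs the $T^1$-lifting principle simultaneously for $\sH$, for $\pi$ and for $\deff$, which requires in addition the $R$-freeness of $\img\left(H^1(\gothtxf)\to H^1(T_{\gothX/S})\right)$ over every Artinian base --- and this is precisely what \eqref{eq txf sequence 2} combined with \cite{L2} (freeness of kernel, image and cokernel of the restriction map on relative Hodge cohomology) delivers. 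Your arguments for \eqref{enum ii} and \eqref{enum iii} are moreover circular: you invoke the coisotropic fibration $P_t\ratl B_t$ on the nearby fibre $X_t$ to control the Douady space of $X_t$ at $[F_t]$ and to identify the general fibre of $\sH\to\deff$ with $B_t$, but the existence of $P_t$ for $t\neq 0$ is Corollary \ref{cor:covered}, which is deduced from the theorem you are proving. For \eqref{enum iii} the paper avoids this entirely by a normalization and Stein-factorization argument: smoothness of $\sH$ at $[F]$ yields a section of $\tilde\sH\to\defx$, which forces the finite part of the Stein factorization to be an isomorphism, whence connectedness, and by generic smoothness irreducibility, of the general fibre.
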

We will henceforth choose the representative of $\defx$ so such that the conclusion of \eqref{enum iii} is fulfilled.
\begin{proof}
Consider the exact sequence
\begin{equation}\label{eq t1}
0\to T_X\langle F\rangle \to T_X \to N_{F/X} \to 0
\end{equation}
of sheaves on $X$ which defines $T_X\langle F\rangle$.
Then $H^1(X,\txf)$ is the tangent space to the relative Hilbert scheme $\sH$ at $[F]$. Let $S=\Spec R$ be a local Artinian scheme of finite type over $\C$ and let $\gothF \into \gothX$ be a deformation of $F \into X$ over $S$. Smoothness of $\sH$, $\deff$ and $\pi$ follows via the $T^1$-lifting principle (cf. \cite[\S 14]{GHJ} for a concise account) if we can show that $H^1(\gothtxf)$ and $\img\left(H^1(\gothtxf)\to H^1(T_{\gothX/T})\right)$ are free $R$-modules. Moreover, $\img\left(H^1(\txf)\to H^1(T_X)\right)$ will be the tangent space to $\deff$ at $[F]$ in this case.

As $H^1(T_X)=H^1(\Omega_X)=0$ we have also $H^1(T_{\gothX/S})=H^1(\Omega_{\gothX/S})=0$. So there is an exact sequence
\begin{equation}\label{eq txf sequence}
 0 \to H^0(N_{\gothF/\gothX}) \to H^1(\gothtxf) \to  H^1(\Omega_{\gothX/S}) \to H^1(N_{\gothF/\gothX}).
\end{equation}
Suppose that $P\ratl B$ is the coisotropic fibration of a smoothly algebraically coisotropic subvariety $P\subset X$ such that $F$ is a general fiber of $P\ratl B$. Then on $X$ we have a short exact sequence
\begin{equation}\label{eq normal}
0\to N_{F/P} \to N_{F/X} \to N_{P/X}\vert_F \to 0
\end{equation}
where the first term is isomorphic to $\sO_F^{2n-2k}$ and the last term to $\Omega_F$ by Lemma \ref{lemma isotropic}. As $\gothF \into \gothX$ is isotropic by Lemma \ref{lemma remains isotropic} there is a relative version of this sequence 
\begin{equation}\label{eq normal2}
  0\to K \to N_{\gothF/\gothX} \to \Omega_{\gothF/S} \to 0
\end{equation}
which gives back \eqref{eq normal} when restricted to the central fiber. Note that all sheaves in \eqref{eq normal2} are $S$-flat.
In order to show that $K$ is also the trivial bundle it suffices to show that the restriction to the central fiber $H^0(K) \to H^0(K\tensor_R \C)=H^0(\sO_F^{2n-2k})$ is surjective. This is easily shown by induction. 
Indeed, suppose that $N\in \N$ is such that $\gothm_R^N=0$ where $\gothm_R$ is the maximal ideal of $R$ and denote $S_m:=\Spec R/\gothm_R^{m+1}$. Then it suffices to inductively show that $K\tensor \sO_{S_m} =\sO_{\gothF\times_S S_m}^{2n-2k}$ for all $m\in \N$.
Fix $m\in \N$ and suppose that $K\tensor \sO_{S_m}$ is a free $\sO_{\gothF\times_S S_m}$-module. Consider the exact sequence 
$$
0\to \sO_F^{2n-2k} \tensor_\C \left(\gothm^{m+1}/\gothm^{m+2}\right) \to K\tensor \sO_{S_{m+1}} \to K\tensor \sO_{S_m}\to 0
$$ 
obtained by flatness of $K$. Then $H^0(K\tensor \sO_{S_{m+1}}) \to H^0(K\tensor \sO_{S_m})$ is surjective by the vanishing of $H^1(\sO_F^{\oplus2(n-k)})$ and the triviality of $K$ follows.

We deduce that also $H^1(K)=H^1(\sO_\gothF^{2n-2k})=0$ and the map $H^1(N_{\gothF/\gothX}) \to H^1(\Omega_{\gothF/S})$ is injective. Combining this with \eqref{eq txf sequence}, we obtain an exact sequence
\begin{equation}\label{eq txf sequence 2}
 0 \to H^0(N_{\gothF/\gothX}) \to H^1(\gothtxf) \to  H^1(\Omega_{\gothX/S}) \to H^1(\Omega_{\gothF/S})
\end{equation}
where the last map is just the ordinary restriction map by Lemma \ref{lemma isotropic}. We observe that the last two terms are $R$-free by \cite[Th\'eor\`eme 5.5]{Deligne} and so is the cokernel, hence the image and the kernel, of the last map in this sequence by \cite[Theorem 4.17]{L2}. Also $H^0(N_{\gothF/\gothX})$ is $R$-free because of the sequence \eqref{eq normal2} and \cite[Th\'eor\`eme 5.5]{Deligne}. 
Consequently, we  deduce the freeness of $H^1(\gothtxf)$ and conclude the proof of \eqref{enum i} and \eqref{enum ii}.

For \eqref{enum iii}, consider the normalization $\tilde\sH \to \sH$. Note that as $\sH$ is smooth in a neighborhood of $[F]\in \sH$, the scheme structure on $\sH$ is reduced and normalization is well-defined. Smoothness of $\sH$ in a neighborhood of $[F]$ also yields a section of $\sH \to \defx$ which induces a section of the composition $\tilde \sH \to \sH \to \defx$ as the normalization is an isomorphism over the smooth locus. This is where one might need to shrink the representative. Let $\tilde\sH \to D \to \defx$ be the Stein factorization of $\tilde \sH \to \defx$. As $D\to\defx$ is finite, admits a section, and $\defx$ is smooth, $D$ has a component isomorphic to $\defx$. But $\tilde\sH$ is integral, so $D$ has also to be integral and $D=\defx$. It follows that the general fiber of $\sH\to\defx$ is connected.
From resolution of singularities together with generic smoothness one deduces that the general fiber of a morphism with connected general fiber from an irreducible space to a smooth variety has irreducible generic fiber, so we infer that this is true for $\sH\to\defx$.
\end{proof}
The fact that $H^1(\Omega_{\gothX/S}) \to H^1(\Omega_{\gothF/S})$ has a locally free cokernel could also be deduced from a Katz-Oda type argument, see \cite{ko}, but this only seems to work when $S$ is an infinitesimal truncation of a smooth variety, a restriction which is not necessary with the above argument. We deduce the following 
\begin{corollary}\label{corollary hilbertscheme}
In the situation of Theorem \ref{thm defo fiber} for every small deformation $F_t\subset X_t$ of $F\subset X$ the Hilbert scheme $\Hilb(X_t)$ is smooth at $[F_t]$ of dimension $2n-2k$.
\end{corollary}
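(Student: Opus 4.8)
The plan is to deduce the corollary from the \emph{relative} statement of Theorem~\ref{thm defo fiber} rather than by a direct obstruction computation on the single manifold $X_t$: the obstruction space $H^1(F_t,N_{F_t/X_t})$ is in general nonzero, so the smoothness of $\Hilb(X_t)$ at $[F_t]$ cannot be read off from the individual fiber. Instead I would realize $\Hilb(X_t)$, in a neighborhood of $[F_t]$, as a fiber of the morphism $\pi:\sH\to\defx$. Indeed, near $[F]$ the space $\sH$ carries the scheme structure of the relative Hilbert scheme of $\mathcal X\to\defx$, whose fiber over $t$ is $\Hilb(X_t)$; and since $\pi$ factors through the closed immersion $\deff\into\defx$, for $t\in\deff$ this fiber is also the fiber of $\sH\to\deff$ over $t$ (for $t\notin\deff$ the fiber is empty, so no deformation $F_t$ exists).

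First I would invoke parts \eqref{enum i} and \eqref{enum ii} of Theorem~\ref{thm defo fiber}: both $\sH$ and $\deff$ are smooth at the relevant points and $\sH\to\deff$ is smooth at $[F]$. Because smoothness of a morphism is an open condition, $\sH\to\deff$ is a smooth morphism on a whole neighborhood $U$ of $[F]$ in $\sH$, and every fiber of a smooth morphism is smooth. Hence for each $[F_t]\in U$ the fiber through $[F_t]$, which by the previous paragraph is $\Hilb(X_t)$ near $[F_t]$, is smooth at that point.

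It remains to compute the dimension, and for this I would use that the relative dimension of a smooth morphism is locally constant on the source, so it suffices to evaluate it at $[F]$. There the fiber of $\pi$ over $[X]$ is $\Hilb(X)$ near $[F]$, smooth of dimension $\dim T_{[F]}\Hilb(X)=h^0(X,N_{F/X})$. Taking global sections in \eqref{eq normal}, whose outer terms are $\sO_F^{2n-2k}$ and $\Omega_F$, yields $0\to\C^{2n-2k}\to H^0(N_{F/X})\to H^0(\Omega_F)$; by Hodge symmetry on the compact K\"ahler manifold $F$ one has $\dim H^0(\Omega_F)=h^{1,0}(F)=h^1(\sO_F)$, which vanishes for $k<n$ by the hypothesis $(n-k)h^1(\sO_F)=0$. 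Thus $h^0(N_{F/X})=2n-2k$, and this is the common dimension of the fibers $\Hilb(X_t)$ at $[F_t]$.

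The genuinely delicate point is the bookkeeping of the first paragraph: one must check that the fiber of the relative Hilbert scheme is the absolute $\Hilb(X_t)$ near $[F_t]$ and that replacing $\defx$ by $\deff$ leaves this fiber unchanged. This is guaranteed by the factorization of $\pi$ through its scheme-theoretic image $\deff$ together with the choice of scheme structure on $\sH$ near $[F]$. Everything else rests on the soft but decisive fact that fibers of a smooth morphism are smooth of locally constant dimension, which is precisely what converts the relative smoothness furnished by Theorem~\ref{thm defo fiber} into the pointwise smoothness of $\Hilb(X_t)$.
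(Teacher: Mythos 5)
Your proof takes essentially the same route as the paper's: identify $\Hilb(X_t)$ near $[F_t]$ with a fibre of the smooth morphism $\sH\to\Def(F,X)$ furnished by Theorem~\ref{thm defo fiber}\,\eqref{enum ii}, and read off the (locally constant) fibre dimension at the central point as $h^0(N_{F/X})=2n-2k$. The only difference is that you spell out the dimension count via the sequence \eqref{eq normal} and the vanishing $h^1(\sO_F)=0$, which the paper leaves implicit (and which, as your parenthetical ``for $k<n$'' hints, is exactly where the hypothesis $(n-k)h^1(\sO_F)=0$ gives nothing in the Lagrangian case $k=n$ --- a caveat the paper's own one-line proof shares).
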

\begin{proof}
 The Hilbert scheme $\Hilb(X_t)$ is locally at $[F_t]$ given by the fiber of $\sH \to \Def(F,X)$ over $t \in \Def(F,X)$ which is smooth at $[F_t]$ by Theorem \ref{thm defo fiber}. Hence, also the fiber dimension of $\sH\to \Def(F,X)$ is constant and equal to $\dim H^0(N_{F/X})=2n-2k$.
\end{proof}

\begin{corollary}\label{cor:covered}
In the situation of Theorem \ref{thm defo fiber} for any $t\in \Def (F,X)$ the deformation $X_t$ of $X$ contains a codimension $k$ algebraically coisotropic subvariety $P_t$ covered by deformations $F_t$ of $F$. 
\end{corollary}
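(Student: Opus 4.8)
The plan is to build $P_t$ as the image of the universal fibre under evaluation and then to read off its codimension and coisotropy from a tangent space computation at a general point. After shrinking the representative of $\defx$ as in Theorem \ref{thm defo fiber}\eqref{enum iii}, let $\sI \subset \sH\times_{\defx}\sX$ be the universal family carried by $\sH$, with structure map $q\colon \sI\to\sH$ and evaluation $\ev\colon \sI\to\sX$. Since $\sH$ is irreducible, dominates $\deff$, and has fibres of dimension $2n-2k$ over $\deff$ by Corollary \ref{corollary hilbertscheme}, the scheme-theoretic image $\sP:=\ev(\sI)$ is an irreducible subvariety of $\sX$ lying over $\deff$; I set $P_t:=\sP\cap X_t$. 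By construction $P_t$ is covered by the deformations $F_t$ of $F$ inside $X_t$, and each $F_t$ is isotropic of dimension $k$ by Lemma \ref{lemma remains isotropic}. It remains to prove that $\codim_{X_t}P_t=k$ and that $P_t$ is algebraically coisotropic.

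Fix $t\in\deff$ and a general point $x$ of a general fibre $F_t$, and set $p=([F_t],x)\in\sI$. Where $P_t$ is smooth we have $T_xP_t=\img(d\ev_p)$, and since $\sH_t$ is smooth of dimension $2n-2k$ at $[F_t]$ with $T_{[F_t]}\sH_t=H^0(N_{F_t/X_t})$, this image is the preimage in $T_xX_t$ of $\img\big(\ev_x\colon H^0(N_{F_t/X_t})\to N_{F_t/X_t,x}\big)$, a space automatically containing $T_xF_t$. Now the hypothesis enters: for $k<n$, condition \eqref{eq:hypo} gives $h^1(\sO_F)=0$, hence $H^0(\Omega_{F_t})=0$ by deformation invariance of Hodge numbers, so the sequence \eqref{eq normal2} (equivalently, the first part of Lemma \ref{lemma isotropic} applied to $F_t\subset X_t$) shows that every section of $N_{F_t/X_t}$ lies in $K=\ker\big(N_{F_t/X_t}\to\Omega_{F_t}\big)$. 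As $K$ is trivial along $F_t$ — established over Artinian bases in the proof of Theorem \ref{thm defo fiber}, and valid for all nearby $t$ because $h^0(K_t)=2n-2k$ is constant, so that $q_*K$ is locally free and $q^*q_*K\to K$ is an isomorphism near the central fibre — the evaluation $\ev_x$ is surjective onto $K_x$. By the very definition of the map in Lemma \ref{lemma isotropic} (contraction with $\sigma_t$) one has $K_x=(T_xF_t)^\perp/T_xF_t$, and therefore $T_xP_t=(T_xF_t)^\perp$.

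Both claims now follow. On one hand $\dim T_xP_t=\dim(T_xF_t)^\perp=2n-k$, so $P_t$ has pure dimension $2n-k$, that is codimension $k$. On the other hand, taking orthogonals yields $(T_xP_t)^\perp=T_xF_t\subset T_xP_t$, which is exactly the coisotropy of Definition \ref{def:main} and simultaneously identifies the radical $T_{P_t}^\perp$ with the tangent space to the fibres $F_t$. Hence the rational map $P_t\ratl B_t$ induced by $q$ satisfies $T_{P_t/B_t}=T_{P_t}^\perp$ over a dense open subset, so $P_t$ is algebraically coisotropic with the $F_t$ as general fibres of its coisotropic fibration. In the boundary case $k=n$ the base $B_t$ is a point and $P_t=F_t$ is a deformed Lagrangian, and the assertion reduces to Voisin's stability statement.

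The delicate point is the uniform, rather than merely generic, control of the evaluation morphism: a priori the fibre dimension of $\sP\to\deff$ could jump and $\ev$ could fail to be generically finite over special fibres, so that the identity $T_xP_t=(T_xF_t)^\perp$ might break down. What prevents this is the global generation of $K$ along every $F_t$, which rests on the freeness results in the proof of Theorem \ref{thm defo fiber} together with the vanishing $h^1(\sO_F)=0$ furnished by \eqref{eq:hypo}. Pinning down the triviality of $K$ uniformly in $t$, and hence the surjectivity of $\ev_x$ onto $(T_xF_t)^\perp/T_xF_t$ at a general point of each fibre, is the step that demands the most care.
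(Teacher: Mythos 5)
Your proposal is correct, and while the construction of $P_t$ as the image of the universal family under evaluation is the same as the paper's, your verification of the codimension and of coisotropy takes a genuinely different route. Both arguments rest on the same two inputs: the vanishing $H^0(\Omega_{F_t})=0$ supplied by \eqref{eq:hypo}, and the triviality of $K=\ker\left(N_{\gothF/\gothX}\to\Omega_{\gothF/S}\right)$ along every nearby fibre, which the paper also asserts uniformly in $t$ and which your argument via the constancy $h^0(K_t)=2n-2k$ (itself a consequence of Corollary \ref{corollary hilbertscheme} once $H^0(N_{F_t/X_t})=H^0(K_t)$) and the isomorphism $q^*q_*K\to K$ near the central fibre justifies correctly. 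The paper then shows that near a general point $P_t$ is locally a product family of isotropic fibres over a $(2n-2k)$-dimensional base, uses the argument of Lemma \ref{lemma constant cycle} to see that $\sigma\vert_{P_t}$ is locally a pullback from that base, and concludes via the numerical criterion $\sigma^{n-k+1}\vert_{P_t^\reg}=0$ of Lemma \ref{lemma coisotropic}; you instead derive $T_xP_t=(T_xF_t)^\perp$ from the surjectivity of the evaluation of normal sections onto $K_x=(T_xF_t)^\perp/T_xF_t$, which delivers the dimension count, the coisotropy, and the identification of the radical with the fibre tangents in a single linear-algebra step, and makes the generic injectivity of $\ev$ (one fibre through a general point) transparent. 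Three small points would tighten your write-up: since Definition \ref{def:main} requires $T_{P^\reg}^\perp\subset T_{P^\reg}$ on all of $P_t^\reg$ and not only at general points, finish by observing that the condition is closed on $P_t^\reg$ (for instance via Lemma \ref{lemma coisotropic} and the identity theorem); for special $t$ the fibre $\sH_t$ may be reducible with components parametrizing singular subschemes, so, as in the paper, run the argument on a component $H_t$ whose general member is smooth; and to obtain the rational map $P_t\ratl B_t$ you implicitly need $\ev$ to be birational onto $P_t$, which follows because any two fibres through a general point $x$ are leaves through $x$ of the distribution $T_{P_t}^\perp$ and hence coincide.
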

\begin{proof}
Let $\sF \subset \sH\times_{\Def(X)}\sX$ be the universal family of deformations of $F$ parametrized by $\sH$. Denote by $F_t$ its fiber over $t\in \sH$. 
We infer from the proof of Theorem \ref{thm defo fiber} that there is an exact sequence  
$ 0\to \sO_{F_t}^{\oplus (n-k)} \to N_{F_t/X_t} \to \Omega_{F_t} \to 0$ for all $t \in \sH$ sufficiently close to $[F]$.
  Hence, the evaluation morphism $H^0(N_{F_t/X_t}) \tensor \sO_{F_t} \to N_{F_t/X_t}$ is injective with cokernel $\Omega_{F_t}$ for all such $t$. 
This means that small deformations of $F_t$ inside $X_t$ do not intersect $F_t$.
Let $H_t$ be a component of $\sH_t$ parametrizing smooth deformations of $F$ and denote by $\sF_t \to H_t$ the restriction of the universal family.
By the preceding corollary (or the exact sequence above) the evaluation map $ev_t:\sF_t \to X_t$ maps onto a subvariety $P_t\subset X_t$ of dimension $\dim(P_t)=2n-k$ and it is generically finite.
Let $p\in P_t$ be a smooth point which is covered by smooth fibers of $\sF_t \to H_t$.
Analytically locally around $p$ the variety $P_t$ has an isotropic fibration whose fibers are the $F_t$'s, in other words, there is an analytically open subset of $P_t$ which is isomorphic to the universal family $\sF_S \subset X_t \times S$ restricted to a small analytically open subset $S \subset H_t$ so that by Lemma \ref{lemma constant cycle}. But then $\sigma^{n-k+1}\vert_{P_t^\reg}=0$ as the analytically open sets of the form $\sF_S$ cover a Zariski open subset of $P_t$ by definition of $P_t$ and so $P_t$ is coisotropic by Lemma \ref{lemma coisotropic}. In particular, $T_P^\perp$ is the relative tangent bundle of $\sF_t\to H_t$ in a small analytic neighborhood of $p$ so that there is only one $F_t$ passing through $p$ and thus $ev_t:\sF_t \to P_t$ is birational.
We deduce that $P_t$ algebraically coisotropic.
\end{proof}

\begin{remark}\label{rmk:defP}
Note that we do however not know whether $P_t$ is a deformation of $P$. One can only say that for general $t\in \deff$ the variety $P_t$ is a deformation of a variety $P_0$ one of whose components is $P$. Also, we do not know whether $P_t$ is smoothly algebraically coisotropic.
\end{remark}


We prove now the following.

\begin{thm}\label{thm:stability}
In the situation of Theorem \ref{thm defo fiber} we have $\Def (F,X)=\Hdg_F$ and
\begin{equation}\label{eq codimension}
 \codim_\defx\Def(F,X) = \rank\left(H^2(X,\C) \to H^2(F,\C)\right).
\end{equation}
In particular, if $\codim_\defx \Hdg_F \geq \rk \NS(F)$, 
then
$$
\codim_\defx \Def (F,X)=\codim \Hdg_F=\rk \NS(F).
$$ 
\end{thm}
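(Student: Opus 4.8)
The plan is to exploit that both $\deff$ and $\Hdg_F$ are smooth at $[X]$ --- the former by Theorem~\ref{thm defo fiber}, the latter by Proposition~\ref{proposition voisin1} --- together with the inclusion $\deff\subseteq\Hdg_F$ recorded just before Theorem~\ref{thm defo fiber}. Since a smooth subgerm contained in a smooth germ of the same dimension through the same point coincides with it, the equality $\deff=\Hdg_F$ reduces to showing that the two Zariski tangent spaces at $[X]$ agree.

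By Proposition~\ref{proposition voisin3}, under the symplectic isomorphism $\sigma\colon H^1(T_X)\isom H^1(\Omega_X)$ the space $T_{\Hdg_F,[X]}$ corresponds to $\ker\bigl(j^*\colon H^1(\Omega_X)\to H^1(\Omega_F)\bigr)$. On the other hand, the long exact sequence of \eqref{eq t1} together with Theorem~\ref{thm defo fiber} identifies $T_{\deff,[X]}$ with $\ker\bigl(a\colon H^1(T_X)\to H^1(N_{F/X})\bigr)$, where $a$ is induced by the restriction-and-quotient map $T_X\to N_{F/X}$. The commutative square of Lemma~\ref{lemma isotropic}, in which the left vertical arrow is induced by $\sigma$,
\[
\xymatrix{
H^1(T_X) \ar[r]^-{a}\ar[d]^{\cong} & H^1(N_{F/X})\ar[d]^{\vphi_*} \\
H^1(\Omega_X) \ar[r]^-{j^*} & H^1(\Omega_F)
}
\]
immediately gives $\ker a\subseteq\sigma^{-1}(\ker j^*)$, i.e.\ recovers $\deff\subseteq\Hdg_F$ at the level of tangent spaces.

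The heart of the argument --- and the step I expect to be the main obstacle --- is to upgrade this inclusion to an equality by proving that $\vphi_*$ is injective; this is exactly where the hypothesis \eqref{eq:hypo} enters. By the last assertion of Lemma~\ref{lemma isotropic}, $\vphi$ is the quotient $N_{F/X}\to N_{P/X}\vert_F\isom\Omega_F$ from \eqref{eq normal}, whose kernel is $N_{F/P}\isom\sO_F^{\oplus 2(n-k)}$. Passing to cohomology in \eqref{eq normal}, the kernel of $\vphi_*$ is a quotient of $H^1(N_{F/P})=H^1(\sO_F)^{\oplus 2(n-k)}$, and this group vanishes precisely because $(n-k)h^1(\sO_F)=0$. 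Hence $\vphi_*$ is injective, so $\ker a=\sigma^{-1}(\ker j^*)$; the tangent spaces coincide and $\deff=\Hdg_F$ follows.

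For the codimension formula, injectivity of $\vphi_*$ and the commutative square yield $\img a\isom\img\bigl(j^*\colon H^1(\Omega_X)\to H^1(\Omega_F)\bigr)$, so $\codim_\defx\deff=\dim_\C\img a$ equals the rank of the restriction $H^{1,1}(X)\to H^{1,1}(F)$. As this restriction is a morphism of Hodge structures and $H^{2,0}(X)=\langle\sigma\rangle$ restricts to $0$ on the isotropic $F$ (and likewise $H^{0,2}(X)$ by conjugation), this rank equals $\rank\bigl(H^2(X,\C)\to H^2(F,\C)\bigr)$, which is \eqref{eq codimension}. Finally, the image of $H^2(X,\Q)\to H^2(F,\Q)$ lands in $H^2(F,\Q)\cap H^{1,1}(F)=\NS(F)_\Q$ by the Lefschetz $(1,1)$ theorem, whence $\codim_\defx\Hdg_F=\rank(j^*)\le\rk\NS(F)$; combined with the standing assumption $\codim_\defx\Hdg_F\ge\rk\NS(F)$ this forces $\codim_\defx\deff=\codim_\defx\Hdg_F=\rk\NS(F)$, the final assertion.
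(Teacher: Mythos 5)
Your proposal is correct and follows essentially the same route as the paper: reduce to equality of tangent spaces using smoothness of both loci, identify $T_{\Hdg_F}$ via Proposition \ref{proposition voisin3} and $T_{\deff}$ as $\ker\left(H^1(T_X)\to H^1(N_{F/X})\right)$, and use the triviality of $N_{F/P}$ together with $(n-k)h^1(\sO_F)=0$ to get injectivity of $H^1(N_{F/X})\to H^1(\Omega_F)$; the codimension statements are also handled as in the paper. Your write-up is in fact slightly more explicit than the paper's in spelling out the commutative square from Lemma \ref{lemma isotropic} that identifies the composite $H^1(T_X)\to H^1(N_{F/X})\to H^1(\Omega_F)$ with $j^*$ under the symplectic isomorphism.
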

\begin{proof}
Recall that $T_{\Hdg_F,0} = \ker\left(H^1(T_X) \isom H^1(\Omega_X) \to H^1(\Omega_F)\right)$ just like in Voisin's original paper, see Proposition \ref{proposition voisin3} above.
As $\Def(F,X) \subset \Hdg_F$ and both are smooth (the former by Theorem \ref{thm defo fiber}, the latter by Proposition \ref{proposition voisin1}) it suffices to show that their tangent spaces agree. Now $T_{\Def(F,X),0}$ is the kernel of $H^1(T_X) \to H^1(N_{F/X})$. Recall that $N_{F/P}$ is trivial and that $N_{P/X}\vert_F\isom \Omega_F$, by Lemma \ref{lemma isotropic}.  Then the map $H^1(N_{F/X})\to H^1(\Omega_F)$ induced from \eqref{eq normal} is injective as $(n-k)h^1(\sO_F)=0$ and 
the equality $T_{\Def(F,X),0}=T_{\Hdg_F,0}$ follows.

We also deduce that $\dim \deff = h^{1,1}(X) - \rk\left(H^1(\Omega_X) \to H^1(\Omega_F)\right)$ which implies the statement on the codimension as $\dim \defx  = h^{1,1}(X)$. 
As $F$ is isotropic,  the restriction map $H^2(X,\C)\to H^2(F,\C)$ sends the transcendental part of $H^2(X,\C)$ to zero so that $\rk\left(H^1(\Omega_X) \to H^1(\Omega_F)\right)=\rk\left(\NS(X) \to \NS(F)\right)$ and hence $\codim_\defx \Hdg_F \leq \rk \NS(F)$ so that the inequality in the other direction implies equality.
\end{proof}

\begin{remark}\label{rmk:rho=1}
The hypothesis  $\codim \Hdg_F \geq \rk \NS(F)$ is always fulfilled if the Picard number $\rho_F$ of $F$ is one, see \cite[Theorem 2.3]{V95}. We do not know whether there are examples where the hypothesis may fail. 
\end{remark}

We conclude the section by noticing that Theorem \ref{thm:stability-intro} follows immediately
by putting together Theorem \ref{thm:stability}, Corollary \ref{cor:covered} and Remark \ref{rmk:rho=1}. 

Similarly, Corollary \ref{cor:char} follows from Theorem \ref{thm:stability-intro} together with Propositions \ref{proposition voisin1} and \ref{proposition voisin2}. 
Arguing as in \cite[Corollaire 1.4]{Voi89} and using Theorem \ref{thm:stability-intro} we also deduce Corollary \ref{cor:voisin}.    
\section{Examples and final remarks}\label{sec:examples}

\begin{Ex}\label{ex:K3}
Let $S$ be a $K3$ surface containing a smooth rational curve $R$.
For any $1\leq k\leq n$ consider 
$$
 P:=P_k =\{\xi \in S^{[n]}: \lg(\Supp(\xi)\cap R)\geq k\}. 
$$ 
There is a natural dominant rational map $P\dashrightarrow S^{[n-k]}$, which restricts to a surjective morphism
over the open subset $U^{[n-k]}$, where $U:=S\setminus R$. The fiber $F_k$ over a point $\eta \in U^{[n-k]}$ is $\P^k\cong R^{[k]}$.  

As the moduli space of marked holomorphic symplectic manifolds is constructed via the local Torelli theorem by patching together the Kuranishi spaces it is sufficient to prove on the local chart $\Def(S^{[n]})$ the assertions of Corollary \ref{cor:moduli}. 
A small deformation of projective space remains projective space, so, by Theorem \ref{thm:stability-intro}, for every $t\in \Def(F_k,S^{[n]})$ the manifold $X_t$ contains a coisotropic 
submanifold $P_t$ ruled by $\P^k$'s. As $F_k\isom \P^k$ has Picard rank one, Theorem~\ref{thm:stability} immediately implies that $\gothD_k:=\Def(F_k,S^{[n]})$ is a divisor in $\Def(S^{[n]})$. 


To see that for any $k\geq 1$ and for a general point  $t\in \Def(F_k,S^{[n]})$ the manifold $X_t$ is not isomorphic to 
a $(K3)^{[n]}$ 
we 
can argue as follows. Let $\ell_t\subset \P^k\cong F\subset P_t$ be a line inside a fiber of the  coisotropic submanifold $P_t\subset X_t$. If $X_t$ were isomorphic to 
 a $(K3)^{[n]}$, then $\Def(F,S^{[n]})$ would be contained in the codimension 2 locus of $\Def(S^{[n]})$ given by the intersection of the Hodge loci of the exceptional class and of the class dual to that of $\ell_t$ contradicting the fact that the $\gothD_k$ are divisors. Note that these two Hodge loci cannot coincide as one can see from the fact that a curve whose class is dual to the exceptional class is contracted under the map to the symmetric product whereas $\ell_t$ is not. Alternatively, 
this follows from the calculations in Remark \ref{remark k3n}. 
By the local Torelli theorem the polarized deformations $\Def(S^{[n]})^{pol}$ of $S^{[n]}$ are isomorphic to the preimage under the period map of the orthogonal to a class $h \in H^2(S^{[n]},\mathbb Z)$ of positive square. As of course $\Hdg_F$ does not coincide with $\Def(S^{[n]})^{pol}$, it cuts a divisor on it. 

All the conclusions of Corollary \ref{cor:moduli} are then  proved in this case. 
\end{Ex}

\begin{remark}\label{remark k3n}
Notice that the common intersection of any two of the $\gothD_k$ (respectively $\gothD_k^{pol}$) is a codimension $2$ subset $\gothM$ (respectively $\gothM^{pol}$) contained in the locus of Hilbert schemes of points. 
Indeed, it is  clear that  $\gothD_k$ contains the locus of Hilbert schemes of points on K3 surfaces containing a deformation of $R$. This is a hypersurface in the space of K3 surfaces and their Hilbert schemes thus form a codimension $2$ subset of $\Def(S^{[n]})$. To show that the intersection of any two (and hence of all) $\gothD_k$ is not bigger than this
by Corollary \ref{cor:voisin}
it is sufficient to show that, say, for $k\neq k'$, the kernels of the cup-product maps $\ker(\mu_{F_k})$ and $\ker(\mu_{F_{k'}})$ do not coincide. On the other hand as in \cite[Lemme 1.5]{Voi89} (see also \cite[Remarque 1.6]{Voi89}) we have that $\ker(\mu_{F_k}) = \ker (j_k^*)$ (resp. $\ker(\mu_{F_{k'}}) = \ker (j_{k'}^*)$), where $j_k^*$ and $j_{k'}^*$ are the pull-back maps in cohomology associated to the inclusions $j_k:F_k\hookrightarrow X$ and $j_{k'}:F_{k'}\hookrightarrow X$.
As $F_k$ and $F_{k'}$ are isotropic, the transcendental lattice $T \subset H^2(X,\C)$ is contained in both kernels and one may verify $\ker (j_k^*)\neq \ker (j_{k'}^*)$ on the Neron-Severi sublattice (with $\Q$-coefficients) 
by explicit geometric calculations. Let us be a little more precise: $\NS(S^{[n]})=\NS(S)\oplus \Q E$ where $E$ is the exceptional divisor of $S^{[n]}\to S^{(n)}$ and if $\Sigma \subset S$ is an effective divisor we obtain a divisor on $S^{[n]}$ via
\[
 D_\Sigma = \{\xi \in S^{[n]}\mid \xi \cap \Sigma \neq \emptyset\}.
\]
The restriction of $D_\Sigma$ to $F_k$ is uniquely determined by its degree (as $F_k \isom \P^k$) and it is easy to check that $\deg D_\Sigma\vert_{F_k} = (\Sigma.R)_S$, which is independent of $k$. On the other hand, $E\vert_{F_k}$  has degree $2(k-1)$, see \cite[VIII Proposition 5.1]{ACGH}, so that indeed the kernel of the restriction map for $k$ and $k'\neq k$ is not the same.

\end{remark}

\begin{Ex}\label{ex:Kum}
Let $T$ be a 2-dimensional complex torus containing a smooth elliptic curve $E$. Then $T$ is fibered over the elliptic curve $E':=T/E$ and $E$ is the fiber over zero of this fibration. The fiber $E_t$ for $t\in E'$ is a translate of $E$ by an arbitrary preimage of $t$ under $T\to E'$.
For any $1\leq k\leq n$ and any  divisor $D$ of degree $k+1$ on $E_t$ we have
$h^0(\mathcal O_{E_t}(D))=k+1$. In this way we get an immersion 
$$
 \P^k \hookrightarrow E_t^{(k+1)}\subset  T^{[k+1]}
$$
where $\P^k$ is the fiber over some fixed point $a\in E_{(k+1)t}$ of the sum map $E_t^{(k+1)}\to E_{(k+1)t}$.
We consider the relative Hilbert scheme $\sE^{(k+1)}$ (or relative symmetric product) of $k+1$ points on the fibers of $T\to E'$ and the submanifold $\tilde P \subset T^{[n-k]}\times\sE^{(k+1)}$
given by taking the fiber of the sum map $T^{[n-k]}\times\sE^{(k+1)}\to T$ over $0\in T$. Note that $\tilde P$ has dimension $2n-k$ and the projection to the first factor induces a surjective morphism $\tilde\phi: \tilde P \to T^{[n-k]}$ whose fibers are exactly the $\P^k$'s introduced above.

Define $P:=P_k\subset K_n(T)$ to be the closure of the image of the rational map $\tilde P \ratl K_{n}(T)$ obtained by taking the union of subschemes in $T^{[n-k]}$ and $\sE^{(k+1)}$. Observe that the fibration $\tilde \phi$ induces an almost holomorphic map $\phi: P_k \ratl T^{[n-k]}$ which is the coisotropic fibration of the coisotropic subvariety $P_k$.
Arguing as in the previous example, one checks that all the conclusions of Corollary \ref{cor:moduli} hold also in this case. Hereby we use that the degree of the exceptional divisor restricted to the general fibers $F_k \isom \P^k$ of the coisotropic fibration of $P_k$ is $2k$.
\end{Ex}

\begin{Ex}\label{ex:BN}
Other examples, for Hilbert schemes of points on $K3$ surfaces, come from the classical Brill-Noether theory. 
The idea is the following: if $(S,H)$ is a generic primitively polarized $K3$ surface of genus $g$,
and $n$ is such that the Brill-Noether number $\rho(g,1,n)=g-2(g-n+1)$ is positive, then one can consider the locus $P\subset S^{[n]}$ covered by 
the rational curves associated to degree $n$ non-constant morphisms $\varphi :C\to \mathbb P^1$, as the curve $C\in |H|$ and the morphism $\varphi$ vary. The subvariety $P$ is swept by projective spaces of dimension equal to the codimension of $P$. These projective spaces  are the projectivizations $\mathbb P H^0(S,E)$ of the space of global sections of the Lazarsfeld-Mukai rank two vector bundle $E$ associated to the data of the curve $C$ together with a pencil of degree $n$. 
See  \cite[\S 4.1, Example 3)]{Voi15} and \cite[\S 2, p. 10-11]{Voi02} for the details.
The  point we want to make here is that these examples differ from the previous ones, in the sense that they do not necessarily come from a contraction of $S^{[n]}$.   
\end{Ex}

\begin{remark}
It seems relevant to observe that in all the previous examples one can easily count parameters and check that, independently of the codimension of the coisotropic subvariety $P$, a line $\ell$ inside a general isotropic fiber of $P$ moves in a family of the expected dimension $2n-2$. This ensures (cf.  \cite[Proposition 3.1]{CP} and \cite{R}) that the curve $\ell$ deforms along its Hodge locus $\Hdg_{[\ell]}$. Nevertheless, without Theorem \ref{thm:stability-intro} we cannot control the dimension of the locus that the deformations $\ell_t$ of $\ell$ cover
in $X_t$, for $t\in \Hdg_{[\ell]}$. 
\end{remark}

\begin{remark}
Is it true that $F_t$ has trivial $CH_0$ if $F$ has?
Since the Hodge numbers remain constant under deformation and a variety with trivial group of $0$-cycles satisfies $h^{p,0}=0$ for all $p>0$ by the Mumford-Rojtman theorem, see e.g.  \cite[Theorem 10.4]{Vo2}, 
we have that $h^{p,0}(F_t)=0,\ \forall p>0$. 
The Bloch-Beilinson conjecture implies that $F_t$ should be a subvariety of $X_t$ with  trivial $CH_0$. We do not know how to show this fact unconditionally.
\end{remark}

\begin{remark}
In Theorem \ref{thm:stability-intro} the hypothesis that $F$ covers a codimension $k$ coisotropic submanifold is not necessary, as, already when $k=1$, it is sufficient to have $\dim (\Def (F,X))=2n-2$ to obtain the conclusion (see \cite[Proposition 3.1]{CP} and \cite{R}).
However, we have restricted our attention to this setting because of the importance of algebraically coisotropic subvarieties underlined in \cite{Voi15}.
\end{remark}

\end{document}